\documentclass[11pt,reqno,sumlimits]{amsart}

\usepackage{amssymb, amscd, amsmath, epsfig, mathtools}
\usepackage[utf8]{inputenc}
\usepackage{amsthm}
\usepackage{enumerate}
\usepackage{xcolor}
\usepackage{scalerel}
\usepackage{soul}
\usepackage{tikz-cd}

\usepackage[margin=1.0in]{geometry}

\newtheorem{theorem}{Theorem}[section]
\newtheorem*{theorem*}{Theorem}
\newtheorem*{proposition*}{Proposition}
\newtheorem{definition}{Definition}[section]

\newtheorem{lemma}{Lemma}[section]
\newtheorem{proposition}{Proposition}[section]
\theoremstyle{definition}
\newtheorem{remark}{Remark}[section]

\newcommand{\R}{\mathbb R}

\newcommand{\calC}{\mathcal C}
\newcommand{\calL}{\mathcal L}
\newcommand{\dvol}{ d\text{Vol}_{g}}

\raggedbottom

\begin{document}

\title[Prescribed Gaussian and Geodesic Curvatures]{Kazdan-Warner Problem on Compact Riemann Surfaces with Smooth Boundary}
\author[J. Xu]{Jie Xu}
\address{
111 Cummington Mall, Department of Mathematics and Statistics, Boston University, Boston, MA, USA, 02215}
\email{xujie@bu.edu}
\address{
600 Dunyu Road, Institute for Theoretical Sciences, Westlake University, Hangzhou, Zhejiang, China, 310030}
\email{xujie67@westlake.edu.cn}

\date{}	
						
\begin{abstract} In this article, we show that (i) any smooth function on compact Riemann surface with non-empty smooth boundary $ (M, \partial M, g) $ can be realized as a Gaussian curvature function; (ii) any smooth function on $ \partial M $ can be realized as a geodesic curvature function for some metric $ \tilde{g} \in [g] $. The essential steps are the existence results of Brezis-Merle type equations $ -\Delta_{g} u + Au = K e^{2u} \; {\rm in} \; M $ and $ \frac{\partial u}{\partial \nu} + \kappa u = \sigma e^{u} \; {\rm on} \; \partial M $ with given functions $ K, \sigma $ and some constants $ A, \kappa $. In addition, we rely on the extension of the uniformization theorem given by Osgood, Phillips and Sarnak.
\end{abstract}

\maketitle

\section{Introduction}
The Kazdan-Warner problem has long history on closed Riemann surface and was comprehensively studied by Kazdan and Warner in 1970s, \cite{KW2, KW3, KW}. The significance of the conformal deformation in the Kazdan-Warner problem links the geometric question with the solvability of some nonlinear elliptic PDEs. Therefore the Kazdan-Warner problem is of primary interest in the area of geometric analysis. It is natural to ask the question of prescribing Gaussian or geodesic curvature functions on compact Riemann surfaces with non-empty smooth boundary, $ (M, \partial M, g), n = \dim M = 2 $. Precisely speaking, we consider prescribing Gaussian and geodesic curvature problem for metrics within a conformal class $ [g] $. Denoting the conformal change to be $ \tilde{g} = e^{2u} g $ for some $ u \in \calC^{\infty}(M) $, the problem of realizing the given functions $ K, \sigma $ as Gaussian and geodesic curvatures of $ \tilde{g} $ is reduced to the existence of smooth solutions of the following PDE
\begin{equation}\label{intro:eqn1}
-\Delta_{g} u + K_{g} = K e^{2u} \; {\rm in} \; M, \frac{\partial u}{\partial \nu} + \sigma_{g} = \sigma e^{u} \; {\rm on} \; \partial M.
\end{equation} 
Here $ -\Delta_{g} $ is the positive definite Laplace-Beltrami operator whose coefficients have smooth extension to $ \partial M $, $ K_{g} $ and $ \sigma_{g} $ are Gaussian and geodesic curvatures of $ g $, respectively. The prescribed geodesic curvature function $ \sigma $ is originally defined on $ \partial M $. Throughout this article, we sometimes use the same notation $ \sigma \in \calC^{\infty}(M) $, which means some smooth extension of the original function $ \sigma \in \calC^{\infty}(\partial M) $. It is well-known that the Gaussian and geodesic curvatures are dominated by the remarkable Gauss-Bonnet theorem, and hence by the Euler characteristic $ \chi(M) $:
\begin{equation}\label{intro:eqn2}
\int_{M} K_{g} \dvol + \int_{\partial M} \sigma_{g} dA_{g} = 2\pi \chi(M).
\end{equation}
Here $ \dvol $ is the Riemannian density on $ M $, $ dA_{g} $ is the volume form on $ \partial M $, both for the metric $ g $. The Gauss-Bonnet formula (\ref{intro:eqn2}) is invariant under conformal change, i.e. (\ref{intro:eqn2}) holds with the volume forms for $ \tilde{g} $ if we replace $ K_{g} $ by $ K $, $ \sigma_{g} $ by $ \sigma $, provided that $ K $ and $ \sigma $ can be realized as associated curvature functions with respect to $ \tilde{g} = e^{2u} g $. 

As necessary conditions, it follows that either $ K $ or $ \sigma $ must be positive somewhere, provided that $ \chi(M) > 0 $, or either $ K $ or $ \sigma $ must be negative somewhere if $ \chi(M) < 0 $, etc. When $ \chi(M) = 0 $, possible cases are $ K > 0 $, $ \sigma < 0 $, $ K $ changes sign and $ \sigma \equiv 0 $, etc.

In this article, we are interested in the sufficient conditions of prescribing Gaussian and geodesic curvature problems within conformal class $ [g] $ of $ (M, \partial M, g) $ with all possible Euler characteristics. We have very recent results in \cite{XU9, XU8} for negative Euler characteristic by monotone iteration scheme, and for zero Euler characteristic with minimal boundary by variational method. Most known results in this topics are mainly in four directions: (i) prescribing both Gaussian and geodesic curvatures for negative or zero Euler characteristics using variational method, or for negative Euler characteristics with flow methods, see e.g. \cite{CMR, HO}, ; (ii) prescribing Gaussian curvatures for minimal boundary or prescribing geodesic curvatures for scalar-flat metrics, see \cite{CV}, etc.; (iii) prescribing Gaussian curvatures on closed Riemann surfaces, see \cite{AC, STRUWE}, etc. for Nirenberg's problem, see \cite{KW2} for both zero and negative Euler characteristics. In addition, many existence results were given bydirect methods of calculus of variations, blow-up analysis and Liouville theorems, see e.g. \cite{AmLiMa, BiMaRi, ChXuYa, CMR, CrRu, DjMaAh, XuZh}.

Our main result states that any smooth function can be realized as either a Gaussian curvature function or a geodesic curvature function for some metric within the conformal class $ [g] $, meanwhile some associated geodesic/Gaussian curvature functions are assigned simultaneously.
\begin{theorem*}\label{intro:thm0}
Let $ (M, \partial M, g) $ be a compact Riemann surface with non-empty smooth boundary.
\begin{enumerate}[(i)]
\item Any function $ K \in \calC^{\infty}(M) $ can be realized as a Gaussian curvature function of some metric $ \tilde{g} \in [g] $;
\item Any function $ \sigma \in \calC^{\infty}(\partial M) $ can be realized as a geodesic curvature function of some metric $ \tilde{\bar{g}} \in [g] $.
\end{enumerate}
\end{theorem*}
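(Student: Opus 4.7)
My plan is to attack (i) and (ii) in parallel, using the two analytic ingredients highlighted in the abstract: the extended Osgood--Phillips--Sarnak (OPS) uniformization, which supplies a convenient representative in the conformal class $[g]$, and existence theorems for Brezis--Merle type PDEs, which handle the main nonlinear step. The decisive structural fact is that each part asks only for one of the two conformally covariant curvatures, leaving the other at my disposal. This free datum is what circumvents the Kazdan--Warner type obstructions and makes all three ranges of Euler characteristic accessible uniformly, including the delicate case $\chi(M)>0$ in which $M$ is a topological disk.

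For part (i) I would first use OPS to select a background $g_0 \in [g]$ with minimal boundary, $\sigma_{g_0}\equiv 0$; any conformal deformation $\tilde g = e^{2u} g_0$ then has its curvatures governed by
\[
-\Delta_{g_0} u + K_{g_0} = K e^{2u}\ \text{in}\ M, \qquad \partial_\nu u = \tilde\sigma e^{u}\ \text{on}\ \partial M,
\]
with $\tilde\sigma$ a boundary function that I am free to choose. The idea is to pick $\tilde\sigma$ compatible with the Gauss--Bonnet balance (\ref{intro:eqn2}) for the target metric, and then to extract the solution from the Brezis--Merle type perturbation
\[
-\Delta_{g_0} v + A v = \widetilde{K} e^{2v}\ \text{in}\ M, \qquad \partial_\nu v + \kappa v = \widetilde{\sigma} e^{v}\ \text{on}\ \partial M,
\]
for positive constants $A,\kappa$ and suitably related $\widetilde K,\widetilde\sigma$. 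The coercive quadratic contributions $\tfrac{A}{2}\int v^2$ and $\tfrac{\kappa}{2}\int_{\partial M} v^2$ in the natural energy make it coercive on $H^1(M)$, while the subcritical Moser--Trudinger--Adams inequalities in dimension two control the exponential nonlinearities, yielding a variational solution. I would then use the Brezis--Merle solutions either directly or, more robustly, as sub- and supersolutions in a monotone iteration scheme adapted from \cite{XU9, XU8} to produce a smooth solution of the original equation. Part (ii) is the mirror construction: apply OPS to obtain $g_0 \in [g]$ with $K_{g_0}$ constant, freely prescribe an auxiliary interior curvature, and solve the companion Brezis--Merle equation in which the principal nonlinearity sits on the boundary.

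The principal obstacle will be the fine compactness analysis for the Brezis--Merle equation in the positive Euler characteristic regime, where the exponential nonlinearities approach the Moser--Trudinger--Adams threshold and minimizing or Palais--Smale sequences may concentrate at an interior or boundary point. The coercive terms $A u,\kappa u$ rule out translation-type non-compactness, but excluding the concentration profiles requires a blow-up analysis in the original spirit of Brezis and Merle; the freedom to tune $\tilde\sigma$ in (i) and $\tilde K$ in (ii) will be essential here, as it allows me to push the energy strictly below the bubbling threshold and to enforce sign conditions that prevent interior or boundary bubbles. Once weak existence is secured, elliptic bootstrap promotes the solution to $\calC^{\infty}$, and reversing the OPS and auxiliary conformal changes produces the desired metric $\tilde g$ or $\tilde{\bar g}$ in the original conformal class.
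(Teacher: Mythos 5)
Your high-level intuition is right: because only one curvature is prescribed, the other is a free parameter, and this is what makes all Euler characteristics tractable. But the way you propose to exploit this freedom diverges from the paper's method at the decisive step, and the divergence matters.

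Your plan is to \emph{pre-select} the free curvature (you write ``pick $\tilde\sigma$ compatible with the Gauss--Bonnet balance, and then extract the solution,'' and for (ii), ``freely prescribe an auxiliary interior curvature'' before solving). That reduces the problem to prescribing \emph{both} curvatures simultaneously, which is precisely the hard problem that forces the blow-up and Moser--Trudinger concentration analysis you worry about in your last paragraph. The paper never does this. Instead, it solves a Brezis--Merle equation with a purely \emph{analytic} Robin condition, and then reads off the induced geodesic (resp.\ Gaussian) curvature \emph{a posteriori}. Concretely, for (i) the paper passes to the OPS model that is \emph{flat with constant geodesic curvature} — the opposite of the minimal-boundary model you chose — so the prescribing equation in the interior is exactly $-\Delta_g u = K e^{2u}$ with no zeroth-order constant; it then scales $K$ to $bK$ with $b\ll 1$, solves $-\Delta_g u = bK e^{2u}$ in $M$, $\partial_\nu u + \kappa u = 0$ on $\partial M$ by monotone iteration (Theorem \ref{BM:thm1}), and observes that $u$ automatically solves the geometric boundary equation with $\sigma := (\sigma_g - \kappa u)e^{-u}$; finally the scaling is undone by taking $\tilde g = b\,e^{2u} g$. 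Part (ii) is the mirror: constant-Gaussian/minimal-boundary model, solve $-\Delta_g u + A u = 0$, $\partial_\nu u = c\sigma e^u$ (Theorem \ref{BM:thm2}), and read off the interior curvature.

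Two concrete gaps in your version. First, your choice of the minimal-boundary model for (i) leaves a nonzero constant $K_{g_0}$ in the interior equation whenever $\chi(M)\neq 0$, so the reduced PDE is not of Brezis--Merle form and does not match either existence theorem of \S3; you would need an additional device to absorb it. Second, your compactness/blow-up program is both unnecessary and potentially fatal: the paper stays entirely subcritical by shrinking $K$ to $bK$ and restricting the boundary constant $c$ below the thresholds $C_1,C_3$, so the monotone iteration scheme applies and no Palais--Smale or bubbling analysis is ever needed. If you insist on pre-prescribing $\tilde\sigma$ at the threshold dictated by Gauss--Bonnet, you lose the small-parameter room that makes the iteration converge, and you are back to a genuinely open concentration problem for $\chi(M)>0$.
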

\medskip

It is quite different from the results on closed Riemann surfaces, which are more restrictive. We gain the flexibility here due to the existence of the boundary; for instance, the Gauss-Bonnet formula (\ref{intro:eqn2}) says that both the Gaussian and geodesic curvatures contribute to the sign of the Euler characteristics. The main tools include the model cases on Riemann surfaces with boundary--the uniformization theorem on compact Riemann surfaces with boundary. In particular, we rely on the remarkable extension of the classical uniformization theorem that was given by Osgood, Phillips and Sarnak \cite{OsPhSa}:
\begin{theorem}\label{intro:thm1}\cite[Introduction]{OsPhSa}
Let $ (M, \partial M, g) $ be a compact Riemann surface with non-empty smooth boundary. Let $ [g] $ be the conformal class of $ g $. Then
\begin{enumerate}[(i)]
\item There exists a metric $ \tilde{g}_{1} \in [g] $ such that $ K_{\tilde{g}_{1}} $ is some constant and $ \partial M $ of zero geodesic curvature with respect to $ \tilde{g}_{1} $;
\item There exists a metric $ \tilde{g}_{2} \in [g] $ such that $ \sigma_{\tilde{g}} $ is some constant and $ M $ is flat with respect to $ \tilde{g}_{2} $.
\end{enumerate}
\end{theorem}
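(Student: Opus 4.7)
The plan is to reduce both statements to boundary value problems for the conformal factor $u$, with $\tilde g_i = e^{2u}g$, and to solve them by case analysis on $\chi(M)$.

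For part (i), the standard conformal transformation laws reduce the conditions $K_{\tilde g_1}$ constant and $\sigma_{\tilde g_1} \equiv 0$ to
\begin{equation*}
-\Delta_g u + K_g = c_1 e^{2u} \text{ in } M, \qquad \frac{\partial u}{\partial \nu} + \sigma_g = 0 \text{ on } \partial M,
\end{equation*}
where $\mathrm{sgn}(c_1) = \mathrm{sgn}(\chi(M))$ is forced by \eqref{intro:eqn2}. If $\chi(M) = 0$, take $c_1 = 0$; the problem becomes linear Neumann, whose Fredholm compatibility condition is exactly \eqref{intro:eqn2}, producing a smooth solution unique up to a constant. If $\chi(M) < 0$, set $c_1 = -1$ and minimize
\begin{equation*}
J[u] = \tfrac12 \int_M |\nabla_g u|^2 \dvol + \int_M K_g\, u \dvol + \int_{\partial M} \sigma_g\, u\, dA_g + \tfrac12 \int_M e^{2u} \dvol
\end{equation*}
over $H^1(M)$: continuity follows from a Moser--Trudinger inequality on surfaces with boundary, and coercivity follows by splitting $u = \bar u + v$ with $\bar u$ the average of $u$, using \eqref{intro:eqn2} together with Jensen's inequality so that the linear part $2\pi\chi(M)\bar u$ is dominated by the exponential term. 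Standard elliptic regularity upgrades a minimizer to a smooth solution. If $\chi(M) > 0$, the classification of compact surfaces with boundary forces $M$ to be a topological disk; the Riemann mapping theorem for bordered Riemann surfaces identifies $(M,[g])$ with the conformal class of the closed flat unit disk, and composing with inverse stereographic projection yields the round hemisphere metric, which realizes constant Gaussian curvature $+1$ with geodesic equatorial boundary.

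Part (ii) follows by the parallel trichotomy applied to
\begin{equation*}
-\Delta_g u + K_g = 0 \text{ in } M, \qquad \frac{\partial u}{\partial \nu} + \sigma_g = c_2 e^u \text{ on } \partial M,
\end{equation*}
with $\mathrm{sgn}(c_2) = \mathrm{sgn}(\chi(M))$: the $\chi(M) = 0$ case is linear Neumann; the $\chi(M) < 0$ case is variational with the analogous functional whose nonlinear term $\int_{\partial M} e^u \, dA_g$ is a boundary integral, requiring a trace-type Moser--Trudinger inequality; and the $\chi(M) > 0$ case again reduces to the flat disk via Riemann mapping.

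The main obstacle is the case $\chi(M) > 0$: the associated semilinear problem sits at the critical threshold of Moser--Trudinger, and a purely variational attack would require concentration-compactness analysis. The Riemann mapping reduction bypasses this by exploiting the rigidity of the conformal class of a disk, and is the one step that genuinely uses the complex-analytic structure of the surface.
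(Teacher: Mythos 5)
The paper does not actually prove this theorem: it is stated with a citation to Osgood, Phillips and Sarnak, who derive it as a by-product of their analysis of extremals of the log-determinant of the Laplacian (Polyakov's functional) within a conformal class on a bordered surface. Your argument is therefore a genuinely different route, namely the classical direct PDE/variational attack on the conformal factor, organized by a trichotomy on $\chi(M)$: linear Neumann for $\chi(M)=0$, coercive minimization via Moser--Trudinger for $\chi(M)<0$, and the Riemann mapping theorem for bordered disks when $\chi(M)>0$. Your outline is sound; a few points worth flagging if you flesh it out are that for $\chi(M)=0$ the same metric satisfies both (i) and (ii) at once since the Neumann problem yields $K_{\tilde g}=\sigma_{\tilde g}=0$; that for part (ii) with $\chi(M)<0$ you correctly identify the need for a trace-type (Lebedev--Milin) Moser--Trudinger inequality, and the coercivity splitting must cope with the fact that $\int_{\partial M} v\,dA_g$ need not vanish even when $\int_M v\,\dvol=0$; and that the smooth extension of the Riemann map up to $\partial M$ (Kellogg--Warschawski) should be invoked to land in the smooth conformal class. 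What the Osgood--Phillips--Sarnak approach buys in exchange for more machinery is a unified spectral characterization of the uniformizing metric as a $\log\det\Delta$-extremal; your approach is more elementary and closer in spirit to the Berger/Kazdan--Warner treatment on closed surfaces, and is arguably the more transparent proof. Since the paper itself gives no proof, there is no in-text argument to compare against beyond the attribution.
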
 
The first result is shown in Theorem \ref{ARB:thm1}, and the second result is shown in Theorem \ref{ARB:thm2}.
\medskip

In addition, we rely on the existence result on Brezis-Merle type equations
\begin{equation}\label{intro:eqn3}
-\Delta_{g} u + Au = K e^{2u} \; {\rm in} \; M, \frac{\partial u}{\partial \nu} + \kappa u = c\sigma e^{u} \; {\rm on} \; \partial M.
\end{equation}
for given $ K, \sigma \in \calC^{\infty}(M) $. Here $ A, \kappa, c $ are some given nonnegative constants such that either $ A = 0, \kappa > 0 $ or $ A > 0, \kappa = 0 $. Due to the positivity of either $ A $ or $ \kappa $, we can apply many tools like the maximum principle, etc. to get more information of the solutions of (\ref{intro:eqn3}). In conclusion, the following proposition plays a central role in showing the main result.
\begin{proposition*}\label{intro:prop1}
Let $ (M, \partial M, g) $ be a compact manifold with non-empty smooth boundary, $ n = \dim M \geqslant 2 $. Let $ K \in \calC^{\infty}(M), \sigma \in \calC^{\infty}(\partial M) $ be given functions.
\begin{enumerate}[(i)]
\item If $ A  = 0 $ and $ \kappa > 0 $, then there exist some positive constants $ C_{0}, C_{1} $ such that (\ref{intro:eqn1}) has a smooth solution $ u $ with given $ K $ and $ \sigma $ such that $ K \leqslant C_{0} $ on $ M $, and $ c \leqslant C_{1} $;
\item If $ A  > 0 $ and $ \kappa = 0 $, then there exist some positive constants $ C_{2}, C_{3} $ such that (\ref{intro:eqn1}) has a smooth solution $ u $ with given $ K $ and $ \sigma $ such that $ K \leqslant C_{2} $ on $ M $, and $ c \leqslant C_{3} $.
\end{enumerate}
\end{proposition*}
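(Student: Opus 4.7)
My plan is to solve (\ref{intro:eqn3}) by the direct method of the calculus of variations applied to the energy
\begin{equation*}
J(u) \;=\; \tfrac{1}{2}\!\int_M |\nabla_g u|^2\,\dvol + \tfrac{A}{2}\!\int_M u^2\,\dvol + \tfrac{\kappa}{2}\!\int_{\partial M} u^2\, dA_g - \tfrac{1}{2}\!\int_M K\,e^{2u}\,\dvol - c\!\int_{\partial M}\sigma\, e^{u}\, dA_g
\end{equation*}
on $H^1(M)$, whose Euler--Lagrange equations are exactly (\ref{intro:eqn3}). The first step is to verify that the quadratic part is coercive: in case (ii) positivity of $A$ makes $\|\nabla u\|_{L^2(M)}^2 + A\|u\|_{L^2(M)}^2$ equivalent to $\|u\|_{H^1(M)}^2$, while in case (i) a Friedrichs-type trace inequality $\|u\|_{L^2(M)}^2 \leq C(\|\nabla u\|_{L^2(M)}^2 + \|u\|_{L^2(\partial M)}^2)$ does the same job for $\|\nabla u\|_{L^2(M)}^2 + \kappa\|u\|_{L^2(\partial M)}^2$.

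The main step is to dominate the two exponential terms using the Moser--Trudinger inequalities on compact surfaces with smooth boundary: the interior one (Fontana/Moser type) bounds $\int_M e^{2u}\,\dvol$ by $C\exp(\alpha\|\nabla u\|_2^2 + \beta\bar u)$, while the trace version controls $\int_{\partial M} e^u\,dA_g$ by an analogous expression involving $\|u\|_{H^1(M)}^2$ and the boundary mean of $u$. With $K\leq C_0$ and $c\leq C_1$ we estimate
\begin{equation*}
\tfrac{1}{2}\!\int_M K\,e^{2u}\,\dvol + c\!\int_{\partial M}\sigma\, e^u\, dA_g \;\leq\; \tfrac{C_0}{2}\!\int_M e^{2u}\,\dvol + C_1\|\sigma\|_{L^\infty(\partial M)}\!\int_{\partial M} e^u\, dA_g,
\end{equation*}
and for $C_0,C_1$ small enough (depending on the Moser--Trudinger constants and on $A$ or $\kappa$) this is dominated, as $\|u\|_{H^1}\to\infty$, by the coercive quadratic part of $J$. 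Hence $J$ is bounded below and coercive on $H^1(M)$.

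The remainder follows the standard direct method: a minimizing sequence $\{u_k\}$ is $H^1$-bounded by coercivity, extracts a weakly convergent subsequence $u_k\rightharpoonup u_\ast$; the quadratic part is weakly lower semicontinuous and the exponential terms converge strongly thanks to the compact embedding $H^1\hookrightarrow L^p$ for every finite $p$ combined with a slightly super-critical Moser--Trudinger estimate. Then $u_\ast$ weakly satisfies (\ref{intro:eqn3}); elliptic regularity for the Robin/oblique boundary problem, together with the maximum principle enabled by $A>0$ or $\kappa>0$, bootstraps $u_\ast$ to $\calC^\infty(\overline M)$. The main obstacle will be the sharp matching of the Moser--Trudinger exponents with the coercive norm, particularly in case (ii) where coercivity is purely interior yet the boundary exponential still has to be tamed via a trace Moser--Trudinger inequality whose constant interacts with the full $H^1$-norm; this interaction is precisely what forces the smallness thresholds $C_0$ and $C_1$.
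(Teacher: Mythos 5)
Your approach (direct minimization of the energy $J$ with Moser--Trudinger control of the exponential terms) is genuinely different from the paper's, which constructs explicit sub- and super-solutions and then invokes a monotone iteration scheme (Theorem \ref{pre:thm1}). Unfortunately, the variational approach as you have set it up has a fatal flaw at the coercivity step, and it is not a matter of sharpening constants.

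The claim that ``for $C_{0}, C_{1}$ small enough the exponential terms are dominated, as $\lVert u \rVert_{H^{1}} \to \infty$, by the coercive quadratic part'' is false whenever $K$ is allowed to be positive somewhere, which the hypothesis $K \leqslant C_{0}$ with $C_{0} > 0$ explicitly permits. Take the constant test functions $u \equiv k$, $k \to +\infty$. Then $\nabla u = 0$ and
\begin{equation*}
J(k) = \tfrac{A}{2}\,k^{2}\,\mathrm{Vol}_{g}(M) + \tfrac{\kappa}{2}\,k^{2}\,\lvert \partial M \rvert_{g} - \tfrac{1}{2}\,e^{2k}\!\int_{M} K \,\dvol - c\, e^{k}\!\int_{\partial M} \sigma \, dA_{g}.
\end{equation*}
If $\int_{M} K \,\dvol > 0$ (for instance $K \equiv C_{0}$), the term $-\tfrac{1}{2}e^{2k}\int_{M}K$ tends to $-\infty$ exponentially while the quadratic terms grow only like $k^{2}$, so $J(k) \to -\infty$ and $J$ is not bounded below, let alone coercive. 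Shrinking $C_{0}$ rescales the exponential term by a constant factor but cannot change the fact that an exponential beats a polynomial; likewise the Moser--Trudinger constants are fixed by the geometry and are not small parameters one can tune against $C_{0}, C_{1}$. Thus there is no unconstrained minimizer in the generality claimed, and the direct method does not give Proposition \ref{intro:prop1}.

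The paper sidesteps this entirely: in Theorems \ref{BM:thm1} and \ref{BM:thm2} one first solves a linear Robin (resp.\ Neumann) problem with positive data to obtain a positive super-solution $u_{+}$, chooses $C_{0}, C_{1}$ so that the super-solution inequalities hold pointwise, then solves a compatible linear Neumann problem and translates it downward by a large constant to obtain a sub-solution $u_{-} \leqslant u_{+}$. Because iteration is carried out between $u_{-}$ and $u_{+}$, the exponential nonlinearities are confined to a bounded range and the Moser--Trudinger obstruction never arises. If you want to salvage a variational argument you would need either to impose $K \leqslant 0$ (removing exactly the interesting part of the statement), or to work on a constrained set or with a truncated functional --- neither of which appears in your proposal.
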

The first conclusion is given in Theorem \ref{BM:thm1}, and the second conclusion is given in Theorem \ref{BM:thm2}. The result above can be easily extended to the case $ A > 0, \kappa > 0 $ but we do not need that version in this article.
\medskip

As another application of the existence results of Brezis-Merle type equations, we can prescribe non-positive Gaussian curvature function and arbitrary geodesic curvature function for metrics within a conformal class  provided that $ \chi(M) = 0 $, the topology is important here as we cannot get similar results for manifolds with $ \chi(M) > 0 $, see Theorem \ref{zero:thm1}. 
\medskip

This article is organized as follows. \S1 is the introduction and list of main results. \S2 contains all necessary definitions and results that are used in later sections. \S3 introduces the existence of solutions of Brezis-Merle type equations in Theorem \ref{BM:thm1} and Theorem \ref{BM:thm2}. \S4 is the main section of this article, which proves the main result as shown in Theorem \ref{ARB:thm1} and Theorem \ref{ARB:thm2}. In \S5, we show the results of prescribing non-positive Gaussian and arbitrary geodesic curvature functions in Theorem \ref{zero:thm1}, provided that $ \chi(M) = 0 $. We give necessary conditions for prescribing Gaussian and geodesic functions with $ \chi(M) < 0 $. In \S7, we show that the sign condition of prescribing Gaussian and geodesic curvature for $ \chi(M) > 0 $ case is only restricted by the topology--the Gauss-Bonnet theorem--in Propostion \ref{ps:prop1}. Similar results are given in Proposition \ref{ps:prop2} for $ \chi(M) < 0 $.
\section{The Preliminaries}
In this section, we give essential definitions and results from differential geometry and the theory of elliptic PDEs on manifolds. Throughout this article, we denote smooth functions $ f $ by $ f \in \calC^{\infty}(M) $ and $ kth $ continuously differentiable functions $ f' $ by $ f' \in \calC^{k}(M) $; $ k = 0 $ implies that $ f' $ is a continuous function. We also denote $ \nu $ to be the unit outward normal vector field along $ \partial M $ on compact manifolds $ (M, \partial M, g) $ with non-empty smooth boundary. We assume the standard background of linear elliptic theory, including Lax-Milgram theorem \cite[\S6]{Lax}, the Sobolev embedding theorem \cite[\S2]{Aubin}, the standard maximum principle \cite[\S5]{T}, the trace theorem \cite[Ch.~4]{T}, Schauder estimates \cite{GT}, etc. We will give results of monotone iteration scheme for nonlinear elliptic PDEs with nonlinear oblique boundary conditions, a Stampacchia type maximum principle for some boundary value problems and a $ W^{s, p} $-type elliptic regularity for elliptic PDEs with oblique boundary conditions.

We begin with the definition of integer-ordered Sobolev spaces on compact manifolds (possibly with boundary). Although in most situation, $ (M, \partial M, g) $ represents compact Riemann surfaces with non-empty boundary, we state our results for general $ n $-dimensional compact manifolds with boundary if possible.
\begin{definition}\label{HL:def1} Let $ (M, g) $ be a closed Riemannian $ n $-manifold, and $ (M, \partial M, g) $ be a compact Riemannian $n$-manifold with non-empty smooth boundary, with volume densities $\dvol$ on $ M $ and $ dS_{g} $ on $ \partial M $. Let $u$ be a real valued function. Let $ \langle v,w \rangle_g$ and $ |v|_g = \langle v,v \rangle_g^{1/2} $ denote the inner product and norm  with respect to $g$. 

(i) 
For $1 \leqslant p < \infty $, we define the Lebesgue spaces on $ M $ to be
\begin{align*}
\mathcal{L}^{p}(M, g)\ &{\rm is\ the\ completion\ of}\ \left\{ u \in \calC^{\infty}(M) : \Vert u\Vert_{\calL^{p}(M, g)}^p :=\int_{M} \left\lvert u \right\rvert^{p} \dvol < \infty \right\}.
\end{align*}

(ii) For $\nabla u$  the Levi-Civita connection of $g$, 
and for $ u \in \calC^{\infty}(\Omega) $ or $ u \in \calC^{\infty}(M) $,
\begin{equation}\label{pre:eqn1}
\lvert \nabla^{k} u \rvert_g^{2} := (\nabla^{\alpha_{1}} \dotso \nabla^{\alpha_{k}}u)( \nabla_{\alpha_{1}} \dotso \nabla_{\alpha_{k}} u).
\end{equation}
\noindent In particular, $ \lvert \nabla^{0} u \rvert^{2}_g = \lvert u \rvert^{2} $ and $ \lvert \nabla^{1} u \rvert^{2}_g = \lvert \nabla u \rvert_{g}^{2}.$\\

(iii) For $ s \in \mathbb{N}, 1 \leqslant p < \infty $, we define the $ (s, p) $-type Sobolev spaces on $ M $ to be
\begin{align}\label{pre:eqn2}
W^{s, p}(M, g) &= \left\{ u \in \mathcal{L}^{p}(M, g) : \lVert u \rVert_{W^{s, p}(M, g)}^{p} : = \sum_{j=0}^{s} \int_{M} \left\lvert \nabla^{j} u \right\rvert^{p}_g \dvol < \infty \right\}.
\end{align}
\noindent Here $ \lvert D^{j}u \rvert^{p} := \sum_{\lvert \alpha \rvert = j} \lvert \partial^{\alpha} u \rvert^{p} $ in the weak sense.  In particular, $ H^{s}(M, g) : = W^{s, 2}(M, g) $ are the usual Sobolev spaces. 

(iv) For $ s \in \mathbb{N}, 1 \leqslant p < \infty $, $ W_{0}^{s, p}(M, g) $ is defined to be the completion of $ \calC_{c}^{\infty}(M) $ with respect to the $ W^{s, p} $-norm. We define $ H_{0}^{s}(M, g) $ similarly.
\end{definition}
\medskip

Next we give a Stampacchia-type maximum principle, which was also used in \cite{KW2}, in which the prescribing Gaussian curvature problem on closed Riemann surfaces was studied.
\begin{proposition}\label{pre:prop1}
Let $ (M, \partial M, g) $ be a compact manifolds with non-empty smooth boundary. Let $ \kappa > 0 $ be some constant. If there exists a function $ u \in \calC^{2}(M) $ such that
\begin{equation}\label{pre:eqn3}
-\Delta_{g} u \leqslant 0, \frac{\partial u}{\partial \nu} + \kappa u \leqslant 0,
\end{equation}
Then $ u \leqslant 0 $ on $ M $.
\end{proposition}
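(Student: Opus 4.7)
The plan is to use the Stampacchia test-function technique: test the differential inequality against the positive part $ u^{+} := \max(u, 0) $ and invoke the boundary condition after an integration by parts. First I would note that since $ u \in \calC^{2}(M) $ on the compact $ M $, the function $ u^{+} $ is Lipschitz and lies in $ W^{1,2}(M, g) $, with $ \nabla u^{+} = \chi_{\{u > 0\}} \nabla u $ almost everywhere; in particular $ \langle \nabla u, \nabla u^{+} \rangle_{g} = |\nabla u^{+}|_{g}^{2} $ a.e., and $ u \cdot u^{+} = (u^{+})^{2} $ pointwise. This makes $ u^{+} $ an admissible test function.

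Multiplying $ -\Delta_{g} u \leqslant 0 $ by $ u^{+} \geqslant 0 $, integrating over $ M $, and applying the divergence theorem yields
\begin{equation*}
0 \geqslant \int_{M} (-\Delta_{g} u) u^{+} \dvol = \int_{M} |\nabla u^{+}|_{g}^{2} \dvol - \int_{\partial M} u^{+} \, \frac{\partial u}{\partial \nu} \, dS_{g}.
\end{equation*}
From the boundary inequality $ \frac{\partial u}{\partial \nu} \leqslant -\kappa u $ on $ \partial M $ and $ u^{+} \geqslant 0 $ I obtain $ u^{+} \frac{\partial u}{\partial \nu} \leqslant -\kappa (u^{+})^{2} $, so combining the two estimates gives
\begin{equation*}
\int_{M} |\nabla u^{+}|_{g}^{2} \dvol + \kappa \int_{\partial M} (u^{+})^{2} \, dS_{g} \leqslant 0.
\end{equation*}

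Since $ \kappa > 0 $ and both integrands are non-negative, each term must vanish separately. The boundary term forces $ u^{+} \equiv 0 $ on $ \partial M $, while the interior term forces $ u^{+} $ to be locally constant on $ M $. On any connected component of $ M $ meeting $ \partial M $, connectivity then propagates $ u^{+} \equiv 0 $ from the boundary into the component; with the standard convention that $ M $ is connected, this yields $ u \leqslant 0 $ throughout $ M $. I expect no substantial obstacle: the argument is a clean energy estimate for an oblique boundary problem, and the only delicate points are the admissibility of $ u^{+} $ as a test function (secured by $ \calC^{2} $-regularity of $ u $) and the propagation of the boundary vanishing into the interior, both of which are routine.
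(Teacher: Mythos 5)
Your proof is correct and is essentially identical to the paper's: both test the differential inequality against $u^{+}=\max(u,0)$, integrate by parts, use the boundary inequality to produce $\int_M |\nabla u^+|^2 + \kappa\int_{\partial M} (u^+)^2 \leqslant 0$, and conclude $u^+\equiv 0$. The only difference is that you spell out the admissibility of $u^+$ as a test function and the propagation-from-the-boundary step in more detail than the paper does.
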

\begin{proof}
For any $ v \in H^{1}(M, g) \cap \calC^{0}(M) $, with $ v \geqslant 0 $ on $ \partial M $, (\ref{pre:eqn3}) implies that
\begin{align*}
0 & \geqslant \int_{M} -\Delta_{g} u \cdot v \dvol = \int_{M} \nabla_{g} u \cdot \nabla_{g} v \dvol + \int_{\partial M} -\frac{\partial u}{\partial \nu} \cdot v dS_{g} \\
& \geqslant \int_{M} \nabla_{g} u \cdot \nabla_{g} v \dvol + \int_{\partial M} \kappa uv dS_{g}.
\end{align*}
Denote $ w = \max_{M}(u, 0) $. It is immediate that $ w \geqslant 0 $, $ w \in H^{1}(M, g) \cap C^{0}(M) $. Take $ v = w $ in the inequality above, it follows that
\begin{equation*}
0 \geqslant \int_{M} \nabla_{g} u \cdot \nabla_{g} w \dvol + \int_{\partial M} \kappa uw dS_{g} \geqslant \int_{M} \lvert \nabla_{g} w \rvert^{2} \dvol + \int_{\partial M} \kappa w^{2} dS_{g} \geqslant 0.
\end{equation*}
It follows that $ w \equiv 0 $ on $ M $, including the boundary. Therefore $ u \leqslant 0 $.
\end{proof}
\begin{remark}\label{pre:re1}
It is immediate that if there is some $ u $ such that 
\begin{equation}\label{pre:eqn4}
-\Delta_{g} u \geqslant 0, \frac{\partial u}{\partial \nu} + \kappa u \geqslant 0,
\end{equation}
then $ u \geqslant 0 $. It is easily seen by replacing $ u \mapsto -u $ and apply Proposition \ref{pre:prop1}.

In addition, a similar argument applies to the smooth function $ v $ with $ -\Delta_{g} v + Av \leqslant 0 \; {\rm in} \; M, \frac{\partial v}{\partial \nu} \leqslant 0 \; {\rm on} \; \partial M $, which follows that $ v \leqslant 0 $ on $ M $.
\end{remark}
\medskip

We now introduce a general version of the monotone iteration scheme for semi-linear elliptic PDE with leading differential operator $ -\Delta_{g} $, plus a nonlinear oblique boundary condition.
\begin{theorem}\cite[Thm.~2.3]{XU8}\label{pre:thm1}
Let $ (M, \partial M, g) $ be a compact manifold with non-empty smooth boundary, $ n = \dim M \geqslant 2 $. Let $ q > n $ be a positive integer. Let $ F(\cdot, \cdot), G(\cdot, \cdot) : M \times \R \rightarrow \R $ be smooth functions. Let $ \nu $ be the unit outward normal vector along $ \partial M $. Let $ \sigma $ be some nonnegative constant. If

(i) there exists two functions $ u_{+} \in \calC^{\infty}(M) $ and $ u_{-} \in \calC^{0}(M) \cap H^{1}(M, g) $ such that
\begin{equation}\label{pre:eqn5}
\begin{split}
-\Delta_{g} u_{+} & \geqslant F(\cdot, u) \; {\rm in} \; M, \frac{\partial u}{\partial \nu} + \sigma u \geqslant G(\cdot, u_{+}) \; {\rm on} \; \partial M; \\
-\Delta_{g} u_{-} & \leqslant F(\cdot, u) \; {\rm in} \; M, \frac{\partial u}{\partial \nu} + \sigma u \leqslant G(\cdot, u_{-}) \; {\rm on} \; \partial M,
\end{split}
\end{equation}
where the sub-solution may hold in the weak sense; and

(ii) in addition, $ \sup_{M} \lvert G(\cdot, u_{+}) \rvert, \sup_{M} \lvert \nabla G(\cdot, u_{+}) \rvert $ are small enough;

(iii) furthermore, $ u_{+} \geqslant u_{-} $ pointwise on $ M $;

then there exists a smooth function $ u \in \calC^{\infty}(M) $ with $ u_{-} \leqslant u \leqslant u_{+} $ such that
\begin{equation}\label{pre:eqn6}
-\Delta_{g} u = F(\cdot, u) \; {\rm in} \; M, \frac{\partial u}{\partial \nu} + \sigma u = G(\cdot, u) \; {\rm on} \; \partial M.
\end{equation}
\end{theorem}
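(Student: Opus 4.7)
The plan is to execute a monotone iteration adapted to the oblique boundary condition, propagating the two-sided barrier at each stage by the Stampacchia-type maximum principle of Proposition \ref{pre:prop1}. I would fix a constant $ C > 0 $ large enough that $ s \mapsto F(x, s) + C s $ is nondecreasing in $ s $ for all $ x \in M $ and all $ s \in [\min_{M} u_{-}, \max_{M} u_{+}] $; this is possible because $ F $ is smooth. Setting $ v_{0} = u_{+} $, I would then define $ v_{k+1} $ inductively as the unique solution of the linear oblique problem
\begin{equation*}
-\Delta_{g} v_{k+1} + C v_{k+1} = F(\cdot, v_{k}) + C v_{k} \; {\rm in} \; M, \quad \frac{\partial v_{k+1}}{\partial \nu} + \sigma v_{k+1} = G(\cdot, v_{k}) \; {\rm on} \; \partial M.
\end{equation*}
Existence and uniqueness follow from Lax-Milgram applied to the coercive bilinear form on $ H^{1}(M, g) $ induced by the shift $ C > 0 $, while the $ W^{s, p} $ theory for oblique boundary value problems referenced in the preliminaries yields $ v_{k+1} \in W^{2, q}(M, g) $ with bounds depending only on $ \Vert v_{k} \Vert_{L^{\infty}(M, g)} $.

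Next I would prove by induction that $ u_{-} \leqslant v_{k+1} \leqslant v_{k} \leqslant u_{+} $ pointwise. The base case $ v_{1} \leqslant v_{0} = u_{+} $ follows from the supersolution inequality (\ref{pre:eqn5}): the difference $ w = u_{+} - v_{1} $ satisfies $ -\Delta_{g} w + C w \geqslant 0 $ in $ M $ and $ \frac{\partial w}{\partial \nu} + \sigma w \geqslant 0 $ on $ \partial M $, so Remark \ref{pre:re1} gives $ w \geqslant 0 $. For the inductive step, $ w = v_{k} - v_{k+1} $ satisfies $ -\Delta_{g} w + C w \geqslant 0 $ by the monotonicity of $ F(\cdot, s) + C s $, while on $ \partial M $ the right-hand side $ G(\cdot, v_{k-1}) - G(\cdot, v_{k}) $ is rewritten via the mean value theorem as $ \partial_{s} G(\cdot, \xi)(v_{k-1} - v_{k}) $. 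Hypothesis (ii) makes this perturbation small, which permits absorbing it by the trace inequality into the quadratic form appearing in the proof of Proposition \ref{pre:prop1}, so the sign $ w \geqslant 0 $ is preserved. The lower bound $ v_{k+1} \geqslant u_{-} $ is obtained symmetrically from the subsolution inequality.

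Once the ordering is secured, $ \{v_{k}\} $ decreases monotonically and is bounded below by $ u_{-} $, hence converges pointwise to some $ v $ with $ u_{-} \leqslant v \leqslant u_{+} $. The uniform $ W^{2, q} $ bounds along the sequence, combined with Sobolev embedding for $ q > n $, furnish uniform $ \calC^{1, \alpha} $ bounds on $ v_{k} $; a subsequence therefore converges in $ \calC^{1}(M) $ to $ v $, which is thereby a strong solution of (\ref{pre:eqn6}). Because $ F $ and $ G $ are smooth, a standard Schauder bootstrap then lifts $ v $ to $ \calC^{\infty}(M) $.

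The principal obstacle is the inductive step for the upper barrier. The nonlinear boundary term cannot be monotonized by the shift $ \sigma u $ alone, since $ \sigma $ is only assumed nonnegative and may vanish identically; consequently there is no built-in coercivity on $ \partial M $ to absorb the residual $ G(\cdot, v_{k-1}) - G(\cdot, v_{k}) $. Hypothesis (ii) enters exactly here: the smallness of $ \sup_{M} \vert G(\cdot, u_{+}) \vert $ and $ \sup_{M} \vert \nabla G(\cdot, u_{+}) \vert $, transferred along the iteration via the uniform $ \calC^{1, \alpha} $ bounds and the proximity of the iterates to $ u_{+} $, guarantees that the $ \partial_{s} G $ factor arising in the boundary comparison is dominated by the interior contribution $ \int_{M} \vert \nabla_{g} w \vert_{g}^{2} \dvol $ through the trace inequality. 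Calibrating this smallness threshold in terms of the trace constant of $ (M, g) $ closes the induction and completes the argument.
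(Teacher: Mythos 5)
The theorem you were asked to prove is cited by the paper from \cite{XU8} without an internal proof, so there is no proof in the paper itself to compare against; I will assess the argument on its own.

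Your overall plan (interior shift to monotonize $F$, iterate downward from $u_{+}$, use Proposition \ref{pre:prop1} to propagate the ordering, bound below by $u_{-}$, pass to the limit, and bootstrap) is the right skeleton. However, the inductive ordering step on the boundary has a genuine gap, and the attempted repair via hypothesis (ii) does not close it. Setting $w = v_{k} - v_{k+1}$ and testing as in Proposition \ref{pre:prop1} against $\phi = \max(-w, 0)$, the boundary contribution you must control is
$$
-\int_{\partial M} \bigl[G(\cdot, v_{k-1}) - G(\cdot, v_{k})\bigr]\,\phi\, dS_{g}.
$$
By the mean value theorem this equals $-\int_{\partial M} \partial_{s}G(\cdot,\xi)\,(v_{k-1}-v_{k})\,\phi\, dS_{g}$, where $v_{k-1}-v_{k} \geqslant 0$ but is \emph{not} the function $\phi$. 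Consequently the term is not a quadratic form in $\phi$ that the trace inequality can absorb into $\int_{M}\lvert\nabla\phi\rvert^{2}$; at best one obtains a quantitative bound on $\int_{\partial M}\phi^{2}$ rather than the vanishing $\phi \equiv 0$ that the induction needs. Furthermore, hypothesis (ii) concerns the smallness of $\sup_{M}\lvert G(\cdot,u_{+})\rvert$ and $\sup_{M}\lvert\nabla G(\cdot,u_{+})\rvert$, i.e., of $G$ and its spatial gradient evaluated along the supersolution; it says nothing about $\partial_{s}G$ on the interval $[\min u_{-},\max u_{+}]$, so it cannot supply the smallness your absorption step presupposes.

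The standard way to close this induction is to introduce a \emph{boundary} shift as well: choose $\tilde{C}>0$ so that $s \mapsto G(x,s) + \tilde{C}s$ is nondecreasing on $[\min u_{-},\max u_{+}]$ for all $x$, and iterate
$$
-\Delta_{g} v_{k+1} + C v_{k+1} = F(\cdot, v_{k}) + C v_{k} \;\;{\rm in}\;M,\quad
\frac{\partial v_{k+1}}{\partial \nu} + (\sigma + \tilde{C}) v_{k+1} = G(\cdot, v_{k}) + \tilde{C} v_{k}\;\;{\rm on}\;\partial M.
$$
Then the sub/super-solution inequalities in (\ref{pre:eqn5}) still give the base case, the monotone dependence of both $F+Cs$ and $G+\tilde{C}s$ gives the inductive ordering outright via Proposition \ref{pre:prop1} (now with coefficient $\sigma+\tilde{C}>0$, so Proposition \ref{pre:prop1} applies directly even if $\sigma=0$), and no smallness is needed for the comparison. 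Hypothesis (ii) is then most plausibly used elsewhere: to keep the $W^{2,q}$-norms of the iterates uniformly bounded via Theorem \ref{pre:thm2} (the boundary data $G(\cdot,v_{k})+\tilde{C}v_{k}$ enters the estimate through a $W^{1,q}$-norm, whose size is controlled by $\sup\lvert G(\cdot,u_{+})\rvert$ and $\sup\lvert\nabla G(\cdot,u_{+})\rvert$ together with the pointwise confinement $u_{-}\leqslant v_{k}\leqslant u_{+}$), so that the limit is strong and the bootstrap goes through. As written, you have conflated the role of hypothesis (ii) in the \emph{regularity} estimates with a role in the \emph{comparison} step, and the comparison step is where the argument actually breaks.
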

\medskip

To close this section, we give a $ W^{s, p} $-type elliptic regularity on $ (M, \partial M, g) $ with general dimensions.
\begin{theorem}\cite[Thm.~2.2]{XU5}\label{pre:thm2} 
Let $ (M, \partial M, g) $ be a compact manifold with non-empty smooth boundary. Choose any $ p > \dim M, p \in \mathbb{N} $. Let $ L: \calC^{\infty}(M) \rightarrow \calC^{\infty}(M) $ be a uniform second order elliptic operator on $ M $ with smooth coefficients up to $ \partial M $ and can be extended to $ L : W^{2, p}(M, g) \rightarrow \calL^{p}(M, g) $. Let $ f \in \calL^{p}(M, g), \tilde{f} \in W^{1, p}(M, g) $. Let $ u \in H^{1}(M, g) $ be a weak solution of the following boundary value problem
\begin{equation}\label{pre:eqn7}
L u = f \; {\rm in} \; M, Bu = \frac{\partial u}{\partial \nu} + c(x) u = \tilde{f} \; {\rm on} \; \partial M.
\end{equation}
Here $ c \in \calC^{\infty}(M) $. Assume also that $ \text{Ker}(L) = \lbrace 0 \rbrace $ associated with the homogeneous Robin boundary condition. If, in addition, $ u \in \calL^{p}(M, g) $, then $ u \in W^{2, p}(M, g) $ with the following estimates
\begin{equation}\label{pre:eqn8}
\lVert u \rVert_{W^{2, p}(M, g)} \leqslant C' \left(\lVert Lu \rVert_{\calL^{p}(M, g)} + \lVert Bu \rVert_{W^{1, p}(M, g)} \right)
\end{equation}
Here $ C' $ depends on $ L, p, c $ and the manifold $ (M, \partial M, g) $, and is independent of $ u $.
\end{theorem}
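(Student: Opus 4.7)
The plan is to reduce the global boundary value problem on $(M,\partial M, g)$ to well-understood Euclidean model problems by means of a finite cover and an associated partition of unity: interior coordinate balls away from $\partial M$, and boundary half-balls in which $\partial M$ has been straightened via a diffeomorphism so that the normal derivative $\partial_\nu$ becomes $\partial_{x_n}$ on $\{x_n = 0\}$. In each local chart, the operator $L$ becomes a second-order uniformly elliptic operator on a Euclidean domain with smooth coefficients, and on the boundary charts the condition $Bu = \tilde f$ becomes a first-order Robin condition on a flat piece of boundary. The smoothness of the coefficients up to $\partial M$ is precisely what allows the localization to preserve the hypotheses.

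On interior charts the classical Calder\'on--Zygmund / Agmon--Douglis--Nirenberg $\calL^p$-theory for strong solutions yields
\begin{equation*}
\lVert \chi_i u \rVert_{W^{2,p}} \leqslant C\bigl(\lVert Lu \rVert_{\calL^p(M,g)} + \lVert u \rVert_{\calL^p(M,g)}\bigr).
\end{equation*}
On boundary charts the key point is that the Robin operator $B = \partial_\nu + c(x)$ has principal part $\partial_\nu$, which is transversal; consequently the pair $(L,B)$ satisfies the Shapiro--Lopatinskii complementing condition, and the corresponding half-space ADN estimate gives
\begin{equation*}
\lVert \chi_j u \rVert_{W^{2,p}} \leqslant C\bigl(\lVert Lu \rVert_{\calL^p(M,g)} + \lVert Bu \rVert_{W^{1,p}(\partial M,g)} + \lVert u \rVert_{\calL^p(M,g)}\bigr),
\end{equation*}
where the trace theorem supplies the compatibility needed for $\tilde f \in W^{1,p}$ to enter the right-hand side correctly. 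Summing over the partition of unity delivers the estimate (\ref{pre:eqn8}) modulo an additive $\lVert u \rVert_{\calL^p(M,g)}$ term.

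To close the argument two things remain: the upgrade of $u$ from $H^1 \cap \calL^p$ to $W^{2,p}$, and the removal of $\lVert u \rVert_{\calL^p(M,g)}$ from the right-hand side. For the former I would invoke Fredholm theory for the bounded operator $(L,B): W^{2,p}(M,g) \to \calL^p(M,g) \times W^{1,p}(\partial M,g)$, which is of index zero; the hypothesis $\mathrm{Ker}(L)=\{0\}$ under the homogeneous Robin condition promotes this to an isomorphism, so one can solve $(L,B)v = (f,\tilde f)$ uniquely in $W^{2,p}$, and the weak uniqueness of $u - v \in H^1$ forces $u = v \in W^{2,p}$. For the latter the isomorphism $(L,B)$ allows one to absorb the $\calL^p$-term by a standard compactness-contradiction argument. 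The main obstacle, and the step I expect to require the most care, is precisely this upgrade: weak $H^1$ membership is not sufficient to directly apply the ADN estimates, so one must either bootstrap via difference quotients along $\partial M$ (tangential regularity first, then use the equation $Lu = f$ to recover the normal second derivative) or use the Fredholm route above, where verifying that the kernel assumption in the theorem matches the kernel used for Fredholm invertibility is what requires attention.
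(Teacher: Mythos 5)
The paper does not contain a proof of this statement: it is cited verbatim from \cite[Thm.~2.2]{XU5} and used as a black box, so there is no proof here against which to compare your argument. On its own merits, your sketch is the standard and correct route: straighten the boundary and localize with a partition of unity, invoke interior Calder\'on--Zygmund estimates and boundary Agmon--Douglis--Nirenberg estimates (Robin satisfies the complementing condition because the principal boundary symbol is the transversal derivative), sum to get the a~priori bound with an extra $\lVert u \rVert_{\calL^p}$ term, and then use Fredholm alternative plus the injectivity hypothesis to upgrade $u$ to $W^{2,p}$ and to absorb the lower-order term via a compactness--contradiction argument. You correctly flag the only genuinely delicate step, namely that the kernel condition is stated for the strong $W^{2,p}$ realization of $(L,B)$ while the given solution a~priori lies only in $H^1\cap\calL^p$; closing that gap requires a bootstrap (start from the $L^2=H^2$ Robin theory, then Sobolev-embed and re-apply the ADN estimate at increasing integrability, which is available because the data satisfy $f\in\calL^p$, $\tilde f\in W^{1,p}$ with $p>\dim M$) so that the weak homogeneous solution $u-v$ indeed lands in the space on which the kernel assumption operates. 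One small point worth making explicit in a full write-up: the theorem's boundary datum $\tilde f$ is posed in $W^{1,p}(M,g)$ rather than in a fractional boundary space, so after localization one must compose with the trace map $W^{1,p}(M)\to W^{1-1/p,p}(\partial M)$ to feed the half-space ADN estimate; your one-line appeal to the trace theorem is exactly what is needed but should be spelled out. Subject to those routine completions, the proposal is sound.
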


\section{Brezis-Merle type Equations with Oblique Boundary Conditions}
In this section, we consider two Brezis-Merle type equations with oblique boundary conditions on general compact manifolds with non-empty smooth boundary $ (M, \partial M, g) $, $ \dim M = n \geqslant 2 $. Given the functions $ K, \sigma \in \calC^{\infty}(M) $, the first one is
\begin{equation}\label{BM:eqn1}
-\Delta_{g} u = K e^{2u} \; {\rm in} \; M, \frac{\partial u}{\partial \nu} + \kappa u = c\sigma e^{u} \; {\rm on} \; \partial M.
\end{equation}
Here $ \kappa > 0 $ is some fixed constant. and $ c > 0 $ is some constant. 
We will show by using monotone iteration scheme that there exist positive constants $ C_{0}, C_{1} $, depending on $ (M, \partial M, g) $, $ \kappa $, elliptic regularity, and the nonlinear structure on the boundary condition, such that when $ c \leqslant C_{1} $ the PDE (\ref{BM:eqn1}) will always have a smooth solution $ u $ for arbitrary $ \sigma $ and $ K \leqslant C_{0} $.
\medskip

The second one is
\begin{equation}\label{BM:eqn1a}
-\Delta_{g} u + Au = Ke^{2u} \; {\rm in} \; M, \frac{\partial u}{\partial \nu} = c' \sigma e^{u} \; {\rm on} \; \partial M.
\end{equation}
with some fixed constant $ A > 0  $ and some constant $ c' > 0 $. We will show by using monotone iteration scheme that there exist positive constants $ C_{2}, C_{3} $, depending on $ (M, \partial M, g) $, $ A $, elliptic regularity, and the nonlinear structure on the boundary condition, such that when $ c \leqslant C_{3} $ the PDE (\ref{BM:eqn1a}) will always have a smooth solution $ u $ for arbitrary $ \sigma $ and $ K \leqslant C_{2} $.
\medskip

When $ g = g_{e} $, the Euclidean metric, the existence of solutions of the PDE
\begin{equation}\label{BM:eqn2}
-\Delta_{g} u = K e^{2u},
\end{equation}
and the compactness properties are known as Brezis-Merle problems. In \cite{BreMe}, (\ref{BM:eqn2}) with constant coefficient $ K $ and Dirichlet boundary condition within a bounded domain $ \Omega \subset \R^{2} $ was studied; in \cite{LiSha}, a blow-up analysis for sequences of solutions of (\ref{BM:eqn2}) was given; in \cite{ChenLi}, (\ref{BM:eqn2}) on unbounded 2-dimensional Euclidean domain was discussed. Foremost, equations of (\ref{BM:eqn2}) appear in problems of prescribing Gaussian curvatures on closed Riemann surfaces, see \cite{AC, KW2, XU9, XU8} and the references therein. With appropriate boundary conditions, Brezis-Merle type equations appear in the study of prescribing both Gaussian and geodesic curvatures, see \cite{CV, HO}, etc. We point out that (\ref{BM:eqn1}) and (\ref{BM:eqn1a}) are not the right PDE for prescribing Gaussian and geodesic curvature problems, but they play an important role in the Kazdan-Warner problem on compact Riemann surfaces, as we shall see in later sections.

We now give results for the existence of the solution of the first equation (\ref{BM:eqn1}).
\begin{theorem}\label{BM:thm1}
Let $ (M, \partial M, g) $ be a compact manifold with non-empty smooth boundary, $ n = \dim M \geqslant 2 $. Let $ \kappa > 0 $ be a fixed constant. Let $ K, \sigma \in \calC^{\infty}(M) $ be given functions. If $ K \leqslant C_{0} $ on $ M $, and $ c \leqslant C_{1} $ for some positive constants $ C_{0}, C_{1} $, (\ref{BM:eqn1}) has a smooth solution $ u $ with given $ K $ and $ \sigma $.
\end{theorem}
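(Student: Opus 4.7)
The approach is to invoke the monotone iteration scheme (Theorem~\ref{pre:thm1}) with $F(x,u) = K(x)e^{2u}$ and $G(x,u) = c\sigma(x)e^{u}$. The tasks are to construct an ordered pair $u_- \leqslant u_+$ of sub- and super-solutions, and to verify the smallness hypothesis (ii) on $G(\cdot, u_+)$.

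I first introduce an auxiliary function $\phi \in \calC^\infty(M)$ solving
\begin{equation*}
-\Delta_g \phi = 1 \text{ in } M, \qquad \frac{\partial \phi}{\partial \nu} + \kappa \phi = 0 \text{ on } \partial M.
\end{equation*}
Proposition~\ref{pre:prop1} shows this Robin problem has trivial kernel, so it is uniquely solvable; smoothness comes from a bootstrap based on Theorem~\ref{pre:thm2}, and Remark~\ref{pre:re1} gives $\phi \geqslant 0$ on $M$.

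For the super-solution I take $u_+ := 1 - C_0 e^{2}\phi$. A direct computation gives $-\Delta_g u_+ = C_0 e^{2}$ and $\partial_\nu u_+ + \kappa u_+ = \kappa$, the Robin condition on $\phi$ absorbing the $\phi$-terms on the boundary. Because $\phi \geqslant 0$ yields $u_+ \leqslant 1$ and hence $e^{2u_+} \leqslant e^{2}$, the hypothesis $K \leqslant C_0$ gives $Ke^{2u_+} \leqslant C_0 e^{2}$, so the interior super-solution inequality holds. On $\partial M$, the required inequality $\kappa \geqslant c\sigma e^{u_+}$ is automatic where $\sigma \leqslant 0$ and, where $\sigma > 0$, holds as soon as $c \leqslant C_1 := \kappa / (e(\|\sigma\|_\infty+1))$.

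For the sub-solution I take $u_- := -N - \|K\|_\infty \phi$ with $N > 0$ to be chosen large. Then $-\Delta_g u_- = -\|K\|_\infty$ and $\partial_\nu u_- + \kappa u_- = -\kappa N$. Since $u_- \leqslant -N \leqslant 0$ gives $e^{2u_-} \leqslant 1$, one has $Ke^{2u_-} \geqslant -\|K\|_\infty$, the interior sub-solution inequality. The boundary inequality $-\kappa N \leqslant c\sigma e^{u_-}$ is automatic where $\sigma \geqslant 0$, and where $\sigma < 0$ reduces to $\kappa N e^{N} \geqslant c\|\sigma\|_\infty$, valid for $N$ sufficiently large. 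The ordering $u_- \leqslant u_+$ amounts to $(C_0 e^{2} - \|K\|_\infty)\phi \leqslant N+1$ and is secured by taking $N$ to dominate $(C_0 e^{2} - \|K\|_\infty)\|\phi\|_\infty$. Hypothesis (ii) of Theorem~\ref{pre:thm1} holds after further shrinking $C_1$ if needed, since $u_+$ is a fixed bounded smooth function and $G(\cdot, u_+) = c\sigma e^{u_+}$ together with its derivatives scales linearly in $c$. Theorem~\ref{pre:thm1} then produces a smooth solution $u$ of (\ref{BM:eqn1}) with $u_- \leqslant u \leqslant u_+$.

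The main obstacle is to reconcile the interior nonlinearity $K e^{2u}$, which pushes the super-solution to be essentially bounded above, with the boundary nonlinearity $c\sigma e^{u}$, which requires some positivity of the Robin combination $\partial_\nu u_+ + \kappa u_+$. The resolution is the specific choice of $\phi$: being Robin-harmonic with unit interior source, it produces a super-solution whose Robin combination is the positive constant $\kappa$ while $u_+$ itself remains $\leqslant 1$, so both inequalities can be met simultaneously by tuning $C_0, C_1, N$.
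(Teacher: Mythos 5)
Your overall strategy mirrors the paper's: build sub- and super-solutions via auxiliary linear Robin problems and then invoke the monotone iteration Theorem~\ref{pre:thm1}. The sub-solution construction is sound. However, there is a sign error in the super-solution that breaks the argument.

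With $\phi$ solving $-\Delta_g\phi = 1$ in $M$ and $\partial_\nu\phi + \kappa\phi = 0$ on $\partial M$, and $u_+ := 1 - C_0 e^2\phi$, a careful computation gives
\begin{equation*}
-\Delta_g u_+ = -\Delta_g\bigl(1 - C_0 e^2\phi\bigr) = -C_0 e^2\bigl(-\Delta_g\phi\bigr) = -C_0 e^2,
\end{equation*}
not $+C_0 e^2$ as you claim. Since the theorem allows $K$ to take positive values (up to $C_0$), and $e^{2u_+} > 0$ always, the required interior inequality $-\Delta_g u_+ \geqslant K e^{2u_+}$ then fails wherever $K > 0$: the left side is the negative constant $-C_0 e^2$ while the right side is positive. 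So your $u_+$ is not a super-solution. The boundary identity $\partial_\nu u_+ + \kappa u_+ = \kappa$ is computed correctly, but that does not rescue the interior.

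The repair is to flip the sign in front of $\phi$, say $u_+ := 1 + \phi$, so that $-\Delta_g u_+ = 1 > 0$ and $\partial_\nu u_+ + \kappa u_+ = \kappa$. But then $u_+ \geqslant 1$ rather than $\leqslant 1$, and the bound $e^{2u_+} \leqslant e^2$ you relied on disappears; instead you must choose $C_0$ small \emph{after} $u_+$ is fixed, e.g.\ $C_0 := e^{-2\max_M u_+}$ so that $K \leqslant C_0 \Rightarrow Ke^{2u_+} \leqslant 1 = -\Delta_g u_+$, and similarly choose $C_1$ using $\max_{\partial M} u_+$. This is essentially the paper's construction: it solves $-\Delta_g u_0 = f$ with $f > 0$ and Robin data $a > 0$, observes $u_0 > 0$ by the maximum principle, and only then declares $C_0, C_1$ in terms of $f$, $a$, and $\max u_0$. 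The constants in the theorem are allowed to depend on the auxiliary solution, so this does not weaken the statement; your version, which tries to fix $C_0$ first and force $u_+ \leqslant 1$, is what introduced the sign conflict.
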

\begin{proof} We construct sub- and super-solutions of (\ref{BM:eqn1}). For super-solution, we consider
\begin{equation*}
-\Delta_{g} u_{0} = f \; {\rm in} \; M, \frac{\partial u_{0}}{\partial \nu} + \kappa u_{0} = a  \; {\rm on} \; \partial M.
\end{equation*}
Here $ f > 0 $ is some smooth function and $ a > 0 $ is some positive constant.By standard elliptic theory, there exists a smooth solution of the equation above. By maximum principle in Proposition \ref{pre:prop1} and Remark \ref{pre:re1}, we conclud that $ u_{0} > 0 $ on $ M $. Clearly, there exists some positive constants $ C_{0}, C_{1}  $ such that
\begin{equation*}
f \geqslant C_{0} e^{2u_{0}} \; {\rm on} \; M, a \geqslant C_{1} \sigma e^{u_{0}} \; {\rm on} \; \partial M.
\end{equation*}
Thus if $ K \leqslant C_{0} $ on $ M $, it follows that
\begin{equation}\label{BM:eqn3}
u_{+} : = u_{0} \Rightarrow -\Delta_{g} u_{+} \geqslant K e^{2u_{+}} \; {\rm on} \; M, \frac{\partial u_{+}}{\partial \nu} + cu_{+} \geqslant c \sigma e^{u_{+}} \; {\rm on} \; \partial M, \forall 0 \leqslant c \leqslant C_{1}.
\end{equation}
Note that both $ C_{0}, C_{1} $ could be very small. For sub-solution, we consider
\begin{equation}\label{BM:eqn4}
-\Delta_{g} u_{1} = b_{1} \; {\rm in} \; M, \frac{\partial u_{1}}{\partial \nu} = b_{2} \; {\rm on} \; \partial M, \int_{M} b_{1} \dvol = -\int_{\partial M} b_{2} dS_{g}, b_{1} < 0, b_{2} > 0.
\end{equation}
Due to the choices of constants $ b_{1} $ and $ b_{2} $, (\ref{BM:eqn4}) has a smooth solution $ u_{1} $. Choose $ C \gg 1 $ such that
\begin{equation*}
u_{2} : = u_{1} - C < u_{+} \; {\rm on} \; M \; {\rm pointwise}.
\end{equation*}
Clearly $ u_{2} \in \calC^{\infty}(M) $ solves (\ref{BM:eqn4}). In addition, we take $ C $ to be large enough such that
\begin{align*}
b_{1} & \leqslant K e^{2u_{2}} = K e^{2u_{1}} \cdot e^{-2C} \; {\rm on} \; M, \\
b_{2} + \kappa u_{2} & = b_{2} + \kappa u_{1} - \kappa C \leqslant C_{1} \sigma e^{u_{2}} = C_{1} \sigma e^{u_{1}} \cdot e^{-C} \leqslant c \sigma e^{u_{1}} \cdot e^{-C} \; {\rm on} \; \partial M, \forall 0 \leqslant c \leqslant C_{1}.
\end{align*}
These can be done since $ b_{1} < 0 $, $ b_{2} + \kappa u_{1} - \kappa C < 0 $ when $ C $ large, and $ e^{-C} $ is very small when $ C $ large enough. It follows that
\begin{equation}\label{BM:eqn5}
u_{-} : = u_{2} \Rightarrow -\Delta_{g} u_{-} \leqslant K e^{2u_{-}} \; {\rm on} \; M, \frac{\partial u_{-}}{\partial \nu} + \kappa u_{-} \leqslant c\sigma e^{u_{-}} \; {\rm on} \; \partial M.
\end{equation}
We may need to make the constant $ C_{1} $ even smaller if necessary so that the restrictions (ii) in Theorem \ref{pre:thm1} hold. Note that both (\ref{BM:eqn3}) and (\ref{BM:eqn5}) holds for smaller threshold $ C_{1} $. Since $ u_{+} \geqslant u_{-} $ pointwise by the setup, we conclude by Theorem \ref{pre:thm1} that (\ref{BM:eqn1}) has a smooth solution $ u $ with given $ K $ and $ \sigma $, provided that $ K \leqslant C_{0} $ and $ c \leqslant C_{1} $ for some positive constant $ C_{0} $ and $ C_{1} $.
\end{proof}
\begin{remark}\label{BM:re1}
We are not giving the optimal $ C_{0} $ and $ C_{1} $ in the previous proof. The upper bound $ C_{0} $ is due to a pure analytic reason. It is interesting to see that similar upper bounds are imposed in the results of Brezis-Merle problems, see e.g. \cite{BreMe, LiSha}.
\end{remark}
\medskip

We now turn to the second equation (\ref{BM:eqn1a}). The strategy of the proof is quite similar as in Theorem \ref{BM:thm1}.
\begin{theorem}\label{BM:thm2}
Let $ (M, \partial M, g) $ be a compact manifold with non-empty smooth boundary, $ n = \dim M \geqslant 2 $. Let $ A > 0 $ be a fixed constant. Let $ K, \sigma \in \calC^{\infty}(M) $ be given functions. If $ K \leqslant C_{2} $ on $ M $, and $ c \leqslant C_{3} $ for some positive constants $ C_{2}, C_{3} $, (\ref{BM:eqn1a}) has a smooth solution $ u $ with given $ K $ and $ \sigma $.
\end{theorem}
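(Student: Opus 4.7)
The plan is to mirror the argument of Theorem \ref{BM:thm1}, interchanging the roles played by the boundary coercivity $\kappa$ and the interior coercivity $A$, and then feed the resulting barriers into the monotone iteration scheme of Theorem \ref{pre:thm1}. I will take $F(x,u) = -Au + K(x)e^{2u}$ and $G(x,u) = c'\sigma(x)e^{u}$, with the constant in the linearized boundary condition of Theorem \ref{pre:thm1} set to $0$ (allowed because that constant is only required to be nonnegative).

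For the super-solution, I would first solve the linear oblique problem
\begin{equation*}
-\Delta_{g} u_{0} + A u_{0} = f \; {\rm in} \; M, \quad \frac{\partial u_{0}}{\partial \nu} = a \; {\rm on} \; \partial M,
\end{equation*}
for some fixed smooth $f > 0$ and constant $a > 0$. Since $A > 0$, the operator $-\Delta_{g} + A$ with Neumann data is invertible and standard elliptic theory together with Theorem \ref{pre:thm2} gives a smooth solution $u_{0}$. The version of the maximum principle stated in Remark \ref{pre:re1} (applied to $-u_{0}$) forces $u_{0} \geqslant 0$, hence $u_{0}$ is uniformly bounded with $e^{2u_{0}}$ bounded on $M$. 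Using boundedness of $\sigma$ on $\partial M$, I can then choose $C_{2}, C_{3} > 0$ small enough that
\begin{equation*}
f \geqslant C_{2}\, e^{2u_{0}} \; {\rm on} \; M, \qquad a \geqslant C_{3}\, \sigma e^{u_{0}} \; {\rm on} \; \partial M,
\end{equation*}
so that $u_{+} := u_{0}$ satisfies the super-solution inequalities for every $K \leqslant C_{2}$ and every $0 \leqslant c' \leqslant C_{3}$.

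For the sub-solution, since the interior, not the boundary, now carries the coercive zeroth-order term, I cannot just shift down by a constant (the Neumann condition for $-C$ would read $0 \leqslant c'\sigma e^{-C}$, which fails when $\sigma$ takes negative values). Instead, I solve a linear auxiliary problem
\begin{equation*}
-\Delta_{g} u_{1} + A u_{1} = b_{1} \; {\rm in} \; M, \quad \frac{\partial u_{1}}{\partial \nu} = b_{2} \; {\rm on} \; \partial M,
\end{equation*}
with two constants $b_{1} < 0$ and, crucially, $b_{2} < 0$; invertibility again gives a smooth $u_{1}$. Setting $u_{-} := u_{1} - C$ for $C \gg 1$, one has
\begin{equation*}
-\Delta_{g} u_{-} + A u_{-} = b_{1} - AC, \qquad \frac{\partial u_{-}}{\partial \nu} = b_{2},
\end{equation*}
and the two sub-solution inequalities
\begin{equation*}
b_{1} - AC \leqslant K e^{2(u_{1} - C)}, \qquad b_{2} \leqslant c'\sigma e^{u_{1} - C}
\end{equation*}
both hold for $C$ large, because the left-hand sides are fixed (respectively very) negative constants while the right-hand sides tend to $0$ as $C \to \infty$ thanks to the $e^{-2C}$ and $e^{-C}$ factors. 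Taking $C$ even larger if necessary, I arrange $u_{-} \leqslant u_{+}$ pointwise.

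Finally, I apply Theorem \ref{pre:thm1} with the pair $(u_{-}, u_{+})$ just constructed. The only remaining hypothesis is the smallness condition (ii) on $\sup_{M} |G(\cdot, u_{+})|$ and $\sup_{M} |\nabla G(\cdot, u_{+})|$; since $G(x,u) = c'\sigma(x) e^{u}$ and $u_{+}$ is already fixed, this is achieved by shrinking $C_{3}$ once more, which preserves all the inequalities above. The conclusion of Theorem \ref{pre:thm1} then produces a smooth solution $u$ of (\ref{BM:eqn1a}) with $u_{-} \leqslant u \leqslant u_{+}$. The main conceptual point, and the only real difference from Theorem \ref{BM:thm1}, is identifying the correct sign pattern for the auxiliary data $(b_{1}, b_{2})$: the boundary datum must be \emph{negative}, so that the Neumann derivative of $u_{-}$ survives as a genuine obstruction dominated by the exponentially small $c'\sigma e^{u_{-}}$; this is the step where the structural asymmetry between Theorems \ref{BM:thm1} and \ref{BM:thm2} is felt most strongly.
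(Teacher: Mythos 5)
Your proof is correct and follows essentially the same strategy as the paper's: build a nonnegative super-solution $u_+$ by solving a linear problem with $-\Delta_g + A$ and applying the maximum principle, build the sub-solution $u_-$ by solving a linear auxiliary problem and translating down by a large constant $C$, then invoke Theorem~\ref{pre:thm1} after shrinking $C_3$ to satisfy its smallness hypothesis. The only substantive deviation is in the auxiliary problem for the sub-solution: the paper solves a pure Neumann problem $-\Delta_g u_1 = b_1$, $\partial u_1/\partial\nu = b_2$, which requires the compatibility condition $\int_M b_1\, d\mathrm{Vol}_g = -\int_{\partial M} b_2\, dS_g$, and so chooses $b_1 > 0$, $b_2 < 0$; you instead solve $-\Delta_g u_1 + A u_1 = b_1$, $\partial u_1/\partial\nu = b_2$, which is unconditionally solvable because $A > 0$, freeing you to take both $b_1 < 0$ and $b_2 < 0$. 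Both variants work, since after subtracting $C$ the interior inequality is driven by the term $-AC \to -\infty$ and is insensitive to the sign of $b_1$; the compatibility constraint is the reason the paper is forced into $b_1 > 0$. Your observation that $b_2 < 0$ is the load-bearing sign condition here -- unlike in Theorem~\ref{BM:thm1} where the boundary coercivity $\kappa u_-$ absorbs a positive $b_2$ -- correctly identifies the asymmetry that the paper's choice of signs is also responding to, just through the lens of the compatibility condition rather than direct invertibility.
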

\begin{proof}
We construct sub- and super-solutions. For super-solution, we consider
\begin{equation}\label{BM:eqn6}
-\Delta_{g} u_{0} + Au_{0} = a \: {\rm in} \; M, \frac{\partial u_{0}}{\partial \nu} = b \; {\rm on} \; \partial M.
\end{equation}
Here $ a, b > 0 $ are some constant in (\ref{BM:eqn6}). By standard elliptic theory, (\ref{BM:eqn6}) has a smooth solution $ u_{0} $. Since $a, b > 0 $, maximum principle implies that $ u_{0} \geqslant 0 $ on $ M $, up to the boundary. Thus if $ K \leqslant C_{2} = a \cdot e^{-2\max_{M} u_{0}} $, it follows from (\ref{BM:eqn6}) that
\begin{equation}\label{BM:eqn7}
u_{+} : = u_{0} \Rightarrow -\Delta_{g} u_{+} + A u_{+} = a \geqslant K e^{2u_{+}} \; {\rm on} \; M, \frac{\partial u_{+}}{\partial \nu} = b \geqslant c' \sigma e^{u_{+}} \; {\rm on} \; \partial M
\end{equation}
for every $ c' \in [0, C_{3}] $. The constant $ C_{3} $ is determined by $ b, \sigma $ and $ u_{+} $.

For sub-solution, we consider
\begin{equation}\label{BM:eqn8}
-\Delta_{g} u_{1} = b_{1} \; {\rm in} \; M, \frac{\partial u_{1}}{\partial \nu} = b_{2} \; {\rm on} \; \partial M, \int_{M} b_{1} \dvol = -\int_{\partial M} b_{2} dS_{g}, b_{1} > 0, b_{2} < 0.
\end{equation}
Due to the choices of constants $ b_{1} $ and $ b_{2} $, (\ref{BM:eqn4}) has a smooth solution $ u_{1} $. Choose $ C \gg 1 $ such that
\begin{equation*}
u_{2} : = u_{1} - C, u_{2} \leqslant u_{+} \; {\rm on} \; M.
\end{equation*}
Clearly $ u_{2} $ solves (\ref{BM:eqn6}), we can take $ C $ to be even larger so that
\begin{equation}\label{BM:eqn9}
\begin{split}
u_{-} : = u_{2} \Rightarrow & -\Delta_{g} u_{-} + Au_{-} = b_{1} + A(u_{1} - C) < Ke^{2u_{1}} \cdot e^{-2C} = K e^{2u_{-}} \; {\rm on} \; M; \\
& \frac{\partial u_{-}}{\partial \nu} = b_{2} \leqslant c' \sigma e^{u_{1}} \cdot e^{-C} = c' \sigma e^{u_{-}} \; {\rm on} \; \partial M.
\end{split}
\end{equation}
Note that the second inequality in (\ref{BM:eqn9}) holds for every $ c' $ chosen in (\ref{BM:eqn7}), with an appropriate choice of $ C $.

Lastly we take the threshold $ C_{3} $ to be small enough so that the condition (ii) of Theorem \ref{pre:thm1} is satisfied. Applying the monotone iteration scheme, we conclude that there exists some $ u \in \calC^{\infty}(M) $ that solves (\ref{BM:eqn1a}) for $ K \leqslant C_{2} $ and arbitrary $ \sigma $ with some $ c' \leqslant C_{3} $ .
\end{proof}
\begin{remark}\label{BM:re2}
Analogous to Remark \ref{BM:re1}, the choices of $ C_{2}, C_{3} $ are obviously not optimal.
\end{remark}

\section{Prescribing Arbitrary Gaussian or Geodesic Curvatures}
Recall the Kazdan-Warner problem on closed Riemann surface, clearly not every smooth function can be realized as a prescribed Gaussian curvature. One of the well-known obstructions is the Bourguignon-Ezin type obstruction \cite{BE}, which states that any prescribed Gaussian curvature $ K $ must satisfy
\begin{equation*}
\int_{M_{0}} \nabla_{g} K \cdot X_{g} \dvol = 0.
\end{equation*}
Here $ (M_{0}, g) $ is any closed manifold of dimension at least $ 2 $, and $ X_{g} $ is the associated conformal killing field. 

The results are surprisingly different on compact manifolds with non-empty smooth boundary. We are able to show that any given function $ K \in \calC^{\infty}(M) $ can be realized as a Gaussian curvature function for some metric $ \tilde{g} \in [g] $, associated with some geodesic curvature $ \sigma_{\tilde{g}} $ that is determined by $ K $, $ g $ and the topology of $ M $. If we fix the boundary condition, e.g. the minimal boundary, then there are obstructions on the choices of prescribing functions, due to Gauss-Bonnet.

Similarly, we can also show that any given function $ \sigma \in \calC^{\infty}(\partial M) $ can be realized as a geodesic curvature function for some metric pointwise conformal to $ g $, associated with some Gaussian curvature determined by $ M $ and $ \sigma $. Although we have less restrictions on extrinsic curvatures, Escobar \cite{ESC3} showed that the necessary condition of prescribing mean curvature function $ h $ on $ \partial \mathbb{B}^{n} $ with scalar-flat metric in $ \mathbb{B}^{n} $ is
\begin{equation*}
\int_{\partial \mathbb{B}^{n}} Y_{g} \cdot \nabla h dS_{g} = 0.
\end{equation*}
Here $ Y_{g} $ are conformal killing field on $ \partial \mathbb{B}^{n} $.

Our surprising results are due to the Brezis-Merle type equations introduced in \S3. Let's consider the prescribing Gaussian curvature within a conformal class $ [g] $ first.
\begin{theorem}\label{ARB:thm1}
Let $ (M, \partial M, g) $ be a compact Riemann surface with non-empty smooth boundary. For any given function $ K \in \calC^{\infty}(M) $, there exists a conformal metric $ \tilde{g} \in [g] $ such that $ K_{\tilde{g}} = K $, associated with some geodesic curvature $ \sigma_{\tilde{g}} $ determined by $ K $ and $ (M, \partial M, g) $.
\end{theorem}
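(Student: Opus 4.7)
The strategy is to use the Osgood--Phillips--Sarnak uniformization Theorem \ref{intro:thm1}(ii) as a convenient background metric and reduce the problem to the Brezis--Merle existence result Theorem \ref{BM:thm1}. First I would invoke Theorem \ref{intro:thm1}(ii) to pick a metric $g_{2}\in[g]$ with $K_{g_{2}}\equiv 0$ and $\sigma_{g_{2}}\equiv \kappa_{0}$ constant. Writing the desired metric as $\tilde g=e^{2u}g_{2}$, the prescribed Gaussian curvature equation (\ref{intro:eqn1}) becomes
\begin{equation*}
-\Delta_{g_{2}}u = K e^{2u}\ \text{in}\ M, \qquad \frac{\partial u}{\partial\nu}+\kappa_{0} = \sigma_{\tilde g}\, e^{u}\ \text{on}\ \partial M.
\end{equation*}
Because the geodesic curvature $\sigma_{\tilde g}$ is not prescribed in the statement, I am free to impose any convenient boundary condition on $u$ and read off $\sigma_{\tilde g}$ from the equation after the fact.

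Second, I would match the setup of (\ref{BM:eqn1}). Fix a constant $\kappa>0$, an auxiliary function $\sigma_{0}\in\calC^{\infty}(M)$, and a small constant $c>0$; rather than solving directly for $u$, I would seek a function $v$ satisfying
\begin{equation*}
-\Delta_{g_{2}}v = \bigl(K e^{2t}\bigr) e^{2v}\ \text{in}\ M, \qquad \frac{\partial v}{\partial\nu}+\kappa v = c\sigma_{0} e^{v}\ \text{on}\ \partial M,
\end{equation*}
where $t$ is a real constant to be chosen. Since $\sup_{M} K<\infty$, taking $t\ll 0$ makes the effective coefficient $Ke^{2t}$ fall below the constant $C_{0}$ produced by Theorem \ref{BM:thm1}; shrinking $c$ so that $c\leqslant C_{1}$, Theorem \ref{BM:thm1} yields a smooth $v$. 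Setting $u=v+t$ gives a smooth solution of the interior equation $-\Delta_{g_{2}}u=Ke^{2u}$.

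Translating the boundary condition on $v$ back to $u$ and using $\partial_{\nu}u+\kappa_{0}=\sigma_{\tilde g}e^{u}$ then determines
\begin{equation*}
\sigma_{\tilde g} = c e^{-t}\sigma_{0} + e^{-u}\bigl(\kappa_{0}+\kappa(t-u)\bigr)\ \text{on}\ \partial M,
\end{equation*}
a smooth function on $\partial M$ depending on $K$, $(M,\partial M,g)$, and the auxiliary parameters $\kappa,\sigma_{0},c,t$. The conformal metric $\tilde g=e^{2u}g_{2}\in[g]$ then satisfies $K_{\tilde g}=K$ and $\sigma_{\tilde g}$ as above, as required.

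\textbf{Main obstacle.} Most of the analytic content is packaged into Theorem \ref{BM:thm1} through the monotone iteration scheme, so the remaining delicate point in the present argument is merely coordinating the parameters. The thresholds $C_{0},C_{1}$ in Theorem \ref{BM:thm1} depend on $g_{2}$, $\kappa$, and $\sigma_{0}$, so these must be fixed before selecting the shift $t$ and the size of $c$; one must also confirm that the sub-/super-solutions built in the proof of Theorem \ref{BM:thm1} continue to verify hypothesis (ii) of Theorem \ref{pre:thm1} after the rescaling by $e^{2t}$, which is arranged by shrinking $c$ sufficiently. The sign of $\kappa_{0}$ (determined by $\chi(M)$ via Gauss--Bonnet) plays no role since $\sigma_{\tilde g}$ is simply recovered from $u$ at the end.
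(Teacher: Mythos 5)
Your proposal is correct and follows essentially the same route as the paper: reduce via the Osgood--Phillips--Sarnak flat model with constant geodesic curvature, apply Theorem \ref{BM:thm1} to a suitably scaled Gaussian curvature, and read off the geodesic curvature from the Robin boundary condition afterward. Your constant shift $u = v + t$ is the same device as the paper's metric rescaling $\tilde g = b\,e^{2u}g$ with $b = e^{2t}$, and your uniform treatment of the constant $\kappa_0$ avoids the paper's three-case split by $\operatorname{sign}\chi(M)$ without changing the substance of the argument.
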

\begin{proof}
We scale $ K $ to $ bK $ with some positive constant $ b \ll 1 $, if necessary, so that $ \sup_{M} bK \leqslant C_{0} $ with the constant $ C_{0} $ given in Theorem \ref{BM:thm1}. Applying Theorem \ref{BM:thm1} with $ bK $ and $ \sigma = 0 $, it follows that
\begin{equation}\label{ARB:eqn1}
-\Delta_{g} u = bK e^{2u} \; {\rm in} \; M, \frac{\partial u}{\partial \nu} + \kappa u = 0 \; {\rm on} \; \partial M
\end{equation}
admits a smooth solution. When $ \chi(M) < 0 $, we may assume that the model case is $ K_{g} = 0 $ and $ \sigma_{g} = -1 $ due to Theorem \ref{intro:thm1}. It follows that the same solution $ u $ solves the following equation
\begin{equation*}
-\Delta_{g} u = bK e^{2u} \; {\rm in} \; M, \frac{\partial u}{\partial \nu} - 1 = \frac{-\kappa u - 1}{e^{u}} e^{u} : = \sigma_{1} e^{u} \; {\rm on} \; \partial M.
\end{equation*}
After a scaling, we conclude that the metric $ \tilde{g}_{1} = b e^{2u} g $ has $ K_{\tilde{g}_{1}} = K $ and $ \sigma_{\tilde{g}_{1}} = b^{-\frac{1}{2}} \sigma_{1} $.

When $ \chi(M) = 0 $, the model case is $ K_{g} = \sigma_{g} = 0 $, therefore the solution $ u $ also solves
\begin{equation*}
-\Delta_{g} u = bK e^{2u} \; {\rm in} \; M, \frac{\partial u}{\partial \nu} = \frac{-\kappa u}{e^{u}} e^{u} : = \sigma_{2} e^{u} \; {\rm on} \; \partial M,
\end{equation*}
which follows that $ K_{\tilde{g}_{2}} = K $ and $ \sigma_{\tilde{g}_{2}} = b^{-\frac{1}{2}} \sigma_{2} $ for $ \tilde{g}_{2} = be^{2u} g $.

Lastly when $ \chi(M) > 0 $, the model case is $ K_{g} = 0 $ and $ \sigma_{g} = 1 $ by Theorem \ref{intro:thm1}. The function $ u $ given in (\ref{ARB:eqn1} also solves
\begin{equation*}
-\Delta_{g} u = bK e^{2u} \; {\rm in} \; M, \frac{\partial u}{\partial \nu} + 1 = \frac{-\kappa u + 1}{e^{u}} e^{u} : = \sigma_{3} e^{u} \; {\rm on} \; \partial M.
\end{equation*}
Hence $ \tilde{g}_{3} = be^{2u} g $ has Gaussian curvature $ K $ and geodesic curvature $ b^{-\frac{1}{2}} \sigma_{3} $.
\end{proof}
\medskip

Due to a very similar argument, we can show that any function can be realized as a geodesic curvature for some conformal metric.
\begin{theorem}\label{ARB:thm2}
Let $ (M, \partial M, g) $ be a compact Riemann surface with non-empty smooth boundary. For any given function $ \sigma \in \calC^{\infty}(M) $, there exists a conformal metric $ \tilde{g} \in [g] $ such that $ \sigma_{\tilde{g}} = \sigma $, associated with some Gaussian curvature $ K_{\tilde{g}} $ determined by $ \sigma $ and $ (M, \partial M, g) $.
\end{theorem}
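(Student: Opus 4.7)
The approach is to mirror the proof of Theorem~\ref{ARB:thm1}, swapping the roles of the two Brezis--Merle type equations (\ref{BM:eqn1}) and (\ref{BM:eqn1a}), and swapping the two model cases of Theorem~\ref{intro:thm1}. Theorem~\ref{ARB:thm1} used (\ref{BM:eqn1}) with trivial boundary data together with the flat-interior model (ii); here I would use (\ref{BM:eqn1a}) with a trivial (or very small) interior source together with the zero-boundary-geodesic-curvature model (i).

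First, by Theorem~\ref{intro:thm1}(i) I may replace $g$ by a pointwise-conformal metric for which $K_g$ is a constant (of sign $\operatorname{sgn}\chi(M)$ by Gauss--Bonnet) and $\sigma_g\equiv 0$ on $\partial M$; it suffices to prove the conclusion for this normalized representative since $[g]$ is preserved. Fix any $A>0$ and any $K_0\in\calC^\infty(M)$ with $K_0\leqslant C_2$ (for concreteness $K_0\equiv 0$), where $C_2,C_3$ are the thresholds supplied by Theorem~\ref{BM:thm2}. Choose $c'>0$ so small that $c'\leqslant C_3$. Theorem~\ref{BM:thm2} then provides a smooth $u$ solving
\begin{equation*}
-\Delta_g u+Au=K_0\,e^{2u}\ \text{in}\ M,\qquad \frac{\partial u}{\partial\nu}=c'\sigma\,e^{u}\ \text{on}\ \partial M,
\end{equation*}
where $\sigma$ denotes any smooth extension to $M$ of the given boundary function.

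Finally, set $\tilde g:=(c')^{2}\,e^{2u}g$. The standard conformal transformation formulas for Gaussian and geodesic curvatures, combined with the PDE above and $\sigma_g=0$, yield
\begin{equation*}
\sigma_{\tilde g}=(c')^{-1}e^{-u}\bigl(\partial_\nu u+\sigma_g\bigr)=(c')^{-1}e^{-u}\cdot c'\sigma\, e^{u}=\sigma,
\end{equation*}
and analogously
\begin{equation*}
K_{\tilde g}=(c')^{-2}\bigl(K_0+(K_g-Au)e^{-2u}\bigr),
\end{equation*}
which is the Gaussian curvature of $\tilde g$ determined by $\sigma$ and $(M,\partial M,g)$, as claimed.

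No serious obstacle is expected: the essential point is choosing the rescaling factor $\lambda=(c')^{2}$ correctly so that $\sigma_{\tilde g}$ equals $\sigma$ on the nose rather than a nonzero constant multiple of it. This is precisely the analogue of the $b^{-1/2}$ adjustment appearing in Theorem~\ref{ARB:thm1}. The sign of $\chi(M)$ causes no trouble because $K_g$ only enters the formula for the induced Gaussian curvature of $\tilde g$ and never the boundary computation that produces $\sigma_{\tilde g}$.
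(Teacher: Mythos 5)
Your proposal is correct and follows essentially the same route as the paper: apply Theorem~\ref{BM:thm2} with $K\equiv 0$ and small $c'$ to solve $-\Delta_g u + Au = 0$, $\partial_\nu u = c'\sigma e^u$, normalize via Theorem~\ref{intro:thm1}(i) so that $K_g$ is constant and $\sigma_g\equiv 0$, and rescale by $(c')^2$ to make the geodesic curvature exactly $\sigma$. The only cosmetic difference is that the paper writes out the three signs of $\chi(M)$ separately while you handle them uniformly with the explicit conformal transformation formulas.
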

\begin{proof}
This time we apply $ K = 0 $ and the given $ \sigma $ in Theorem \ref{BM:thm2}. Hence there exist some small enough constant $ c $ such that
\begin{equation}\label{ARB:eqn2}
-\Delta_{g} u + Au = 0 \; {\rm in} \; M, \frac{\partial u}{\partial \nu} = c\sigma e^{u} \; {\rm on} \; \partial M
\end{equation}
admit a smooth solution $ u $.

When $ \chi(M) < 0 $, we may assume that the model case is now $ K_{g} = - 1 $ and $ \sigma_{g} = 0 $ by the classical uniformization theorem. Thus the same function $ u $ solves
\begin{equation*}
-\Delta_{g} u - 1 = \frac{-Au - 1}{e^{2u}} e^{2u} : = K_{1} e^{2u} \; {\rm in} \; M, \frac{\partial u}{\partial \nu} = c\sigma e^{u} \; {\rm on} \; \partial M.
\end{equation*}
Hence the metric $ \tilde{g}_{1} = c^{2} e^{2u} g $ has the desired geodesic curvature $ \sigma_{\tilde{g}_{1}} = \sigma $ and $ K_{\tilde{g}_{1}} = c^{-2} K_{1} $.

When $ \chi(M) = 0 $ and $ \chi(M) > 0 $, respectively, we see that the same equation $ u $ solves the following two equations
\begin{align*}
-\Delta_{g} u & = \frac{-Au}{e^{2u}} e^{2u} : = K_{2} e^{2u} \; {\rm in} \; M, \frac{\partial u}{\partial \nu} = c\sigma e^{u} \; {\rm on} \; \partial M; \\
-\Delta_{g} u + 1 & = \frac{-Au + 1}{e^{2u}} e^{2u} : = K_{3} e^{2u} \: {\rm in} \; M, \frac{\partial u}{\partial \nu} = c\sigma e^{u} \; {\rm on} \; \partial M.
\end{align*}
Thus the conclusions follow clearly.
\end{proof}

\section{Prescribing Gaussian and Geodesic Curvatures with $ \chi(M) = 0 $}.
Although we have the interesting results in \S4, we are willing to choose the desired Gaussian and geodesic curvatures simultaneously. We can do a little bit better when $ \chi(M) = 0 $.

In \cite{CV, XU9}, results for prescribing Gaussian curvatures with minimal boundary on $ (M, \partial M, g) $, $ \chi(M) = 0 $, for some conformal metric in $ [g] $ were given: any function $ K $ that can be realized as Gaussian curvature of some metric $ \tilde{g} \in [g] $ with minimal boundary if and only if either (i) $ K \equiv 0 $ or (ii) $ \int_{M} K \dvol < 0 $ and $ K $ changes sign.

In this section, we introduce results for prescribing both nontrivial Gaussian and geodesic curvature functions $ K, \sigma \in \calC^{\infty}(M) $ for some pointwise conformal metric on $ (M, \partial M, g) $, provided that $ \chi(M) = 0 $. The model case is $ K_{g} = \sigma_{g} = 0 $, by the uniformization theorem. In particular, we will consider the following cases:
\begin{equation}\label{zero:eqn0}
 K < 0 \; \text{ everywhere on } \;  M, \sigma > 0 \; \text{everywhere on} \; \partial M.
\end{equation}
Based on the model case, the problem is reduced to the existence of smooth solutions of the equation
\begin{equation}\label{zero:eqn1}
-\Delta_{g} u = K e^{2u} \; {\rm in} \; M, \frac{\partial u}{\partial \nu} = c \sigma e^{u} \; {\rm on} \; \partial M.
\end{equation}
Here we intentionally introduce the constant $ c > 0 $ on the boundary conditions for computational and notational convenience. Recall that (\ref{zero:eqn1}) says that the metric $ \tilde{g} = e^{2u} g $ has $ K_{\tilde{g}} = K $ and $ \sigma_{\tilde{g}} = c\sigma $ by assuming the solvability. Again we try to construct sub- and super-solutions of (\ref{zero:eqn1}). The key step is the solvability of (\ref{BM:eqn1}), the Brezis-Merle type equation that is discussed in \S3.

We need a small lemma to convert the PDE (\ref{zero:eqn1}) into another equivalent version:
\begin{lemma}\cite[Lemma.~5.1]{XU8}\label{zero:lemma1}
Let $ (M, \partial M, g) $ be a compact Riemann surface with non-empty smooth boundary. Let $ K, \sigma \in \calC^{\infty}(M) $ be given functions. Then there exists some function $ u \in \calC^{\infty}(M) $ satisfying
\begin{equation}\label{zero:eqn2}
-\Delta_{g} u + K_{g} \geqslant K e^{2u} \; {\rm in} \; M, \frac{\partial u}{\partial \nu} + \sigma_{g} \geqslant \sigma e^{u} \; {\rm on} \; \partial M
\end{equation}
if and only if there exists some positive function $ w \in \calC^{\infty}(M) $ satisfying
\begin{equation}\label{zero:eqn3}
-\Delta_{g} w - 2wK_{g} + \frac{\lvert \nabla_{g} w \rvert^{2}}{w} \leqslant -2K \; {\rm in} \; M, \frac{\partial w}{\partial \nu} - 2w \sigma_{g} \leqslant -2\sigma w^{\frac{1}{2}} \; {\rm on} \; \partial M.
\end{equation}
In addition, the equalities in (\ref{zero:eqn2}) hold if and only if the equalities in (\ref{zero:eqn3}) hold; Furthermore, the inequalities in (\ref{zero:eqn2}) are in the reverse direction if and only if the inequalities in (\ref{zero:eqn3}) are in the reverse direction, correspondingly.
\end{lemma}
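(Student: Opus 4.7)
The plan is to use the substitution $w = e^{-2u}$, which is a smooth, strictly positive bijection on $\calC^{\infty}(M)$ between real-valued smooth $u$ and positive smooth $w$. First I would record the elementary chain-rule identities
\begin{equation*}
\nabla_{g} w = -2w \nabla_{g} u, \qquad \Delta_{g} w = -2w \Delta_{g} u + 4w \lvert \nabla_{g} u \rvert_{g}^{2}, \qquad \frac{\partial w}{\partial \nu} = -2w \frac{\partial u}{\partial \nu},
\end{equation*}
which serve as the computational engine of the argument. Positivity of $w$ is automatic, so every operation involving $w^{-1}$ or $w^{1/2}$ is unambiguously defined.

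The interior computation hinges on the algebraic identity
\begin{equation*}
-\Delta_{g} w - 2w K_{g} + \frac{\lvert \nabla_{g} w \rvert_{g}^{2}}{w} = -2w \left( -\Delta_{g} u + K_{g} \right),
\end{equation*}
obtained directly from the identities above, since the $4w \lvert \nabla_{g} u \rvert_{g}^{2}$ term cancels exactly with $\lvert \nabla_{g} w \rvert_{g}^{2}/w = 4w \lvert \nabla_{g} u \rvert_{g}^{2}$. Because $w > 0$, multiplying the interior inequality of (\ref{zero:eqn2}) by the strictly negative factor $-2w$ reverses its direction, and the identity $w \cdot e^{2u} = 1$ collapses the right-hand side $-2w \cdot K e^{2u}$ to $-2K$, producing exactly the interior inequality of (\ref{zero:eqn3}). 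The boundary calculation is entirely parallel: the identity
\begin{equation*}
\frac{\partial w}{\partial \nu} - 2w \sigma_{g} = -2w \left( \frac{\partial u}{\partial \nu} + \sigma_{g} \right),
\end{equation*}
combined with $w \cdot e^{u} = w^{1/2}$ and the sign flip by $-2w$, converts the boundary inequality of (\ref{zero:eqn2}) into that of (\ref{zero:eqn3}).

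For the converse implication I would invert the substitution via $u = -\tfrac{1}{2} \log w$, which is smooth precisely because $w$ is strictly positive; since the identities above are algebraic equivalences, the calculation runs backwards verbatim. The final claims of the lemma about equalities and reversed inequalities follow immediately, since the only inequality-affecting step in either direction is multiplication by the strictly negative quantity $-2w$. The \emph{hard part} here is purely bookkeeping: tracking the single sign flip correctly and invoking $w = e^{-2u}$ and $w^{1/2} = e^{-u}$ at the right moments. No analytic input beyond smoothness of $u$ (or smoothness and positivity of $w$) is required.
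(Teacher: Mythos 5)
Your substitution $w=e^{-2u}$ is exactly the one the paper intends (Remark \ref{zero:re1} states this explicitly), and your chain-rule identities and the cancellation of the $4w\lvert\nabla_g u\rvert^2$ terms correctly reduce both the interior and boundary statements, with the single sign reversal from multiplying by $-2w<0$. The proposal is correct and matches the paper's approach.
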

\begin{remark}\label{zero:re1}
When $ K_{g} $ and $ \sigma_{g} $ are Gaussian and geodesic curvatures, respectively, the formula (\ref{zero:eqn2}) with equality has its geometric meaning. But Lemma \ref{zero:lemma1} still holds for arbitrary smooth functions $ K_{g} $ and $ \sigma_{g} $. The relation between $ u $ in (\ref{zero:eqn2}) and $ w $ in (\ref{zero:eqn3}) is $ w = e^{-2u} $.
\end{remark}

\begin{theorem}\label{zero:thm1}
Let $ (M, \partial M, g) $ be a compact Riemann surface with non-empty smooth boundary, $ \chi(M) = 0 $. Let $ K, \sigma \in \calC^{\infty}(M) $ be given functions that satisfying (\ref{zero:eqn0}). Then there exists a positive constant $ D $ such that (\ref{zero:eqn1}) has a smooth solution for given $ K, \sigma $ and all $ c \in (0, D] $.
\end{theorem}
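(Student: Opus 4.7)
The plan is to apply the monotone iteration scheme of Theorem~\ref{pre:thm1} to~(\ref{zero:eqn1}). By Theorem~\ref{intro:thm1}(ii) I may replace $g$ by a conformal representative with $K_g \equiv 0$ in $M$ and $\sigma_g \equiv 0$ on $\partial M$ (available because $\chi(M) = 0$), so the task reduces to exhibiting a sub-solution $u_-$ and a super-solution $u_+$ of~(\ref{zero:eqn1}) with $u_- \leqslant u_+$ pointwise on $M$, small enough to verify hypothesis~(ii) of Theorem~\ref{pre:thm1}. The construction parallels the Brezis--Merle-type argument used to prove Theorem~\ref{BM:thm1}, but is more delicate because the Robin parameter $\kappa$ is absent here.

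For the super-solution, I would let $u_+$ solve the linear Neumann problem
\[
-\Delta_g u_+ = -\lambda \;\text{in}\; M, \qquad \frac{\partial u_+}{\partial \nu} = (1+\eta) c \sigma \;\text{on}\; \partial M,
\]
with $\eta \in (0,1)$ fixed and $\lambda > 0$ given by the compatibility condition $\lambda \int_M \dvol = (1+\eta) c \int_{\partial M} \sigma \, dS_g$. Normalizing so that $\max_{\partial M} u_+ = 0$, the subharmonicity of $u_+$ (which holds since $-\Delta_g u_+ < 0$) forces $u_+ \leqslant 0$ throughout $M$. The assumption $K < 0$ then makes the interior inequality $-\lambda \geqslant K e^{2u_+}$ automatic for small $c$, because $\lambda = O(c)$ while $\min_M(-K) e^{2u_+}$ is bounded below by $\tfrac{1}{2}\min_M(-K)$ for $c$ small; the boundary inequality $(1+\eta) c \sigma \geqslant c \sigma e^{u_+}$ follows from $e^{u_+} \leqslant 1$ on $\partial M$.

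The sub-solution is the delicate step: $-\Delta_g u_- \leqslant K e^{2u_-} < 0$ forces $u_-$ to be strictly subharmonic and excludes constant choices. I would set $u_- = u_1 - C$ with $C > 0$ large, where $u_1$ solves $-\Delta_g u_1 = -\varepsilon'$ in $M$ and $\frac{\partial u_1}{\partial \nu} = \beta c \sigma e^{-C}$ on $\partial M$, with $\beta \in (0,1)$ fixed and $\varepsilon'$ determined by compatibility. Because the Neumann data scale as $c e^{-C}$, one has $\|u_1\|_\infty = O(c e^{-C})$; the boundary inequality collapses to $\beta \leqslant e^{u_1}$, which is satisfied for $C$ large, while the interior inequality $\varepsilon' \geqslant \max_M |K| e^{2u_1} e^{-2C}$ becomes the constraint $C \gtrsim \log(\max_M |K|/c)$, achievable for each fixed $c \in (0, D]$. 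Taking $C$ large also ensures $u_- \approx -C \leqslant u_+ = O(c)$. Finally, shrinking $D$ if necessary so that $c \sigma e^{u_+}$ and $\nabla(c \sigma e^{u_+})$ are small (hypothesis~(ii) of Theorem~\ref{pre:thm1}), the monotone iteration yields a smooth $u \in [u_-, u_+]$ solving~(\ref{zero:eqn1}). The main obstacle is precisely this balancing: the boundary right-hand side $c\sigma e^{u_-} \sim c\sigma e^{-C}$ is exponentially small, so the Neumann data for $u_1$ must be chosen proportional to $c e^{-C}$ rather than a fixed constant as in the proof of Theorem~\ref{BM:thm1}, and the resulting interplay with the interior requirement $\varepsilon' \gtrsim |K| e^{-2C}$ produces the logarithmic constraint $C \gtrsim \log(|K|/c)$ that determines the admissible range of $c$.
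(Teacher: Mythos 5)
Your proof is correct and takes a genuinely different route from the paper, although both rest on the monotone iteration scheme (Theorem~\ref{pre:thm1}) after reducing to the model case $K_g = \sigma_g = 0$. The paper builds its super-solution by solving the nonlinear Brezis--Merle equation~(\ref{zero:eqn4}) with Robin condition $\frac{\partial u_0}{\partial\nu} + \kappa u_0 = 0$, leaning on Theorem~\ref{BM:thm1}; the Stampacchia maximum principle (Proposition~\ref{pre:prop1}) gives $u_0 < 0$, so $-\kappa u_0 > 0$ strictly on $\partial M$ absorbs $c\sigma e^{u_0}$ for small $c$, while the interior super-solution inequality holds with equality and costs nothing. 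You instead use a purely linear Neumann problem with boundary data $(1+\eta)c\sigma$ and constant source $-\lambda$; the interior inequality then requires the extra balancing $\lambda = O(c) \leqslant \min_M |K| \, e^{2\min_M u_+}$, which is fine once $\|u_+\|_{\infty} = O(c)$. For the sub-solution, the paper passes through the substitution $w = e^{-2u}$ of Lemma~\ref{zero:lemma1} and solves a linear problem for $w_0$, with the interior inequality for $w_0 + C$ again automatic since $\lvert\nabla w_0\rvert^2/(w_0+C) \geqslant 0$; you work directly with a shifted linear Neumann solution $u_1 - C$ with $C$-dependent data $\beta c\sigma e^{-C}$, so that $u_1 \to 0$ and the logarithmic constraint $C \gtrsim \log(\max_M|K|/c)$ closes the interior inequality. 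Your construction bypasses both Theorem~\ref{BM:thm1} and Lemma~\ref{zero:lemma1} and is, in that sense, more elementary; the price is that neither the interior inequality for $u_+$ nor for $u_-$ comes for free, and you must track the interplay of the constants $c$, $\lambda$, $\varepsilon'$, $C$ explicitly.

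One point you should state explicitly: the Neumann problems you solve determine $u_+$ and $u_1$ only up to additive constants, so claims like $\|u_1\|_\infty = O(ce^{-C})$ are not meaningful until a normalization is fixed (say $\int_M u_1 \dvol = 0$, after which the elliptic estimates do give that bound); for $u_+$ you did fix a normalization ($\max_{\partial M} u_+ = 0$), and the bound $\|u_+\|_\infty = O(c)$ then follows by combining the zero-mean estimate with the $O(c)$ shift needed to reach this normalization. With those normalizations spelled out, the argument is complete.
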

\begin{proof}
We construct global sub- and super-solutions. For super-solution, we consider the PDE of type (\ref{BM:eqn1}):
\begin{equation}\label{zero:eqn4}
-\Delta_{g} u_{0} = K e^{2u_{0}} \; {\rm in} \; M, \frac{\partial u_{0}}{\partial \nu} + \kappa u = 0.
\end{equation}
By Theorem \ref{BM:thm1}, the equation (\ref{zero:eqn4}) admits a smooth solution $ u_{0} $ as $ K < 0 < C_{0} $ for $ C_{0} $ given in Theorem \ref{BM:thm1}. Since $ K < 0 $ everywhere on $ M $, the maximum principle in Proposition \ref{pre:prop1} implies that $ u_{0} \leqslant 0 $ on $ M $. Clearly $ u_{0} \not\equiv 0 $. It is immediate that
\begin{equation}\label{zero:eqn5}
u_{+} : = u_{0} \Rightarrow -\Delta_{g} u_{+} \geqslant K e^{2u_{+}} \; {\rm on} \; M, \frac{\partial u_{+}}{\partial \nu} = -\kappa u_{+} \geqslant c\sigma e^{u_{+}} \; {\rm on} \; \partial M.
\end{equation}
Note that the inequality of the boundary condition holds when the positive constant $ c \ll 1 $ is small enough, since $ -\kappa u_{+} > 0 $. For the sub-solution, we consider the PDE
\begin{equation*}
-a\Delta_{g} w_{0} = -2K \; {\rm in} \; M, \frac{\partial w_{0}}{\partial \nu} = A \; {\rm on} \; \partial M, A < 0, \int_{M} -2K \dvol = -\int_{\partial M} A dS_{g}.
\end{equation*}
Note that the relation between $ u $ in (\ref{zero:eqn2}) and $ w $ in (\ref{zero:eqn3}) is $ w = e^{-2u} $. It follows that a very large $ w > 1 $ implies a very negative $ u < 0 $. Choose a large enough $ C \gg 1 $ such that 
\begin{equation}\label{zero:eqn6}
w_{1} : = w + C \Rightarrow -\frac{1}{2} \log w_{1} \leqslant u_{+} \; {\rm on} \; M, A \geqslant -2c\sigma w_{1}^{\frac{1}{2}} \; {\rm on} \; \partial M.
\end{equation}
Note that when $ c $ is smaller, we can always choose $ C $ to be larger, and the middle inequality in (\ref{zero:eqn6}) still holds. In addition, when $ C $ is larger, $ e^{-\frac{1}{2} \log w_{1}} $ is even smaller, so a larger choice of $ C $ will not affect the condition (ii) in Theorem \ref{pre:thm1} for the monotone iteration scheme. With fixed $ u_{+} $, we choose $ c $ small enough so that the condition (ii) in Theorem \ref{pre:thm1} holds, then choose $ C $ large enough such that (\ref{zero:eqn6}) holds. By (\ref{zero:eqn6}),
\begin{equation*}
-\Delta_{g} w_{1} + \frac{\lvert \nabla_{g} w_{1} \rvert^{2}}{w_{1}} \geqslant -2K \; {\rm on} \; M, \frac{\partial w_{1}}{\partial \nu} = A \geqslant -2c\sigma w_{1}^{\frac{1}{2}} \; {\rm on} \; \partial M.
\end{equation*}
Applying Lemma \ref{zero:lemma1},
\begin{equation}\label{zero:eqn7}
u_{-} : = -\frac{1}{2} \log w_{1} \Rightarrow -\Delta_{g} u_{-} \leqslant K e^{u_{-}} \; {\rm on} \; M, \frac{\partial u_{-}}{\partial \nu} \leqslant c\sigma e^{u_{-}} \; {\rm on} \; \partial M.
\end{equation}
Clearly (\ref{zero:eqn6}) holds for an even smaller positive constant $ c $ with a larger $ C $ in (\ref{zero:eqn6}), and hence a more negative $ u_{-} $. The upper bound $ D $ for the constant $ c $ thus can be determined by the largest possible $ c $ such that (\ref{zero:eqn5}), (\ref{zero:eqn6}) and (\ref{zero:eqn7}) are satisfied. By (\ref{zero:eqn6}), we have $ u_{+} \geqslant u_{-} $ pointwise.  Applying Theorem \ref{pre:thm1} for (\ref{zero:eqn5}) and (\ref{zero:eqn7}), there exists a smooth function $ u $ which solves (\ref{zero:eqn1}) for small enough $ c $, $ \sigma $ and $ K $. Therefore the metric $ \tilde{g}= e^{2u} g $ has $ K_{\tilde{g}} = K $ and $ \sigma_{\tilde{g}} = c\sigma $. 
\end{proof}
\medskip

\section{Prescribing Gaussian and Geodesic Curvatures with $ \chi(M) < 0 $}
If $ \chi(M) < 0 $, we have shown in \cite{XU9} that any $ K < 0 $ and $ c\sigma $ with small enough $ c \in (0, 1) $ and arbitrary $ \sigma $ can be realized as Gaussian and geodesic curvature functions with $ \chi(M) < 0 $, respectively; in addition, there is a restriction for prescribing functions that are positive somewhere. In this section, we consider the following case,
\begin{equation}\label{neg:eqn1}
K \; \text{changes sign and positive somewhere on} \; M, \sigma \geqslant 0 \; \text{everywhere on} \; \partial M.
\end{equation}
Recall that the smooth functions $ K $ and $ \sigma $ can be realized as Gaussian and geodesic curvatures for some conformal metric if
\begin{equation}\label{neg:eqn2}
-\Delta_{g} u + K_{g} = Ke^{2u} \; {\rm in} \; M, \frac{\partial u}{\partial \nu} + \sigma_{g} u = \sigma e^{u} \; {\rm on} \; \partial M.
\end{equation}
Analogous to the necessary condition given in \cite{KW2}, we give necessary conditions for prescribing functions of type (\ref{neg:eqn1}).
\begin{proposition}\label{neg:prop1}
Let $ (M, \partial M, g) $ be a compact Riemann surface with non-empty smooth boundary, $ \chi(M) < 0 $. If (\ref{neg:eqn2}) admits a smooth solution $ u $ for some $ K, \sigma $ that satisfying (\ref{neg:eqn1}), then
\begin{enumerate}[(i)]
\item The unique solution of
\begin{equation}\label{neg:eqn3}
-\Delta_{g} w = - 2K \; {\rm in} \; M, \frac{\partial w}{\partial \nu} + 2w = 0 \; {\rm on} \; \partial M
\end{equation}
is positive on $ M $; 
\item $ \int_{M} K \dvol < 0 $.
\end{enumerate}
\end{proposition}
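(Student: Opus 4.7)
The plan is to normalize via the uniformization theorem (Theorem \ref{intro:thm1}) and then prove (ii) by a direct integration-by-parts identity and (i) by a sub-solution comparison against the explicit positive candidate $e^{-2u}$.

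Since $\chi(M)<0$, Theorem \ref{intro:thm1} lets me replace $g$ by a conformal representative with $K_{g}\equiv -1$ and $\sigma_{g}\equiv 0$ without loss of generality, as the hypothesis and conclusion of the proposition are both preserved. With this normalization, (\ref{neg:eqn2}) becomes $-\Delta_{g}u - 1 = Ke^{2u}$ in $M$ and $\partial u/\partial\nu = \sigma e^{u}$ on $\partial M$. For (ii), I would write $K = (-\Delta_{g}u-1)e^{-2u}$ and integrate. Green's first identity applied to $\int_{M}(-\Delta_{g}u)e^{-2u}\dvol$, combined with the Robin boundary condition $\partial u/\partial \nu = \sigma e^u$, produces
\begin{equation*}
\int_{M}K\dvol = -2\int_{M}e^{-2u}\lvert\nabla_{g}u\rvert_{g}^{2}\dvol - \int_{\partial M}\sigma e^{-u}\,dS_{g} - \int_{M}e^{-2u}\dvol,
\end{equation*}
which is strictly negative since $\sigma\geqslant 0$ and $\int_{M}e^{-2u}>0$.

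For (i), the plan is to apply Proposition \ref{pre:prop1} to $\psi := \phi - w$ with $\phi := e^{-2u} > 0$. Using the normalized PDE, one computes
\begin{equation*}
-\Delta_{g}\phi = -2K - 2e^{-2u}\bigl(1+2\lvert\nabla_{g}u\rvert_{g}^{2}\bigr) \leqslant -2K, \qquad \frac{\partial\phi}{\partial\nu}+2\phi = 2e^{-u}(e^{-u}-\sigma),
\end{equation*}
so $\psi$ satisfies $-\Delta_{g}\psi \leqslant 0$ with a strict quantitative interior defect, while its Robin datum is $2e^{-u}(e^{-u}-\sigma)$. I would conclude $\psi\leqslant 0$, and hence $w\geqslant e^{-2u}>0$, by testing against $\psi_{+}:=\max(\psi,0)$ in Green's identity: using $\partial w/\partial\nu = -2w$, the resulting energy identity contains a strict interior decay term $2\int_{M}e^{-2u}(1+2\lvert\nabla u\rvert^{2})\psi_{+}\dvol$ and a boundary trace $2\int_{\partial M}\psi_{+}^{2}\,dS_{g}$ on the left, against a sign-indefinite boundary term on the right; absorbing the latter via $\sigma\geqslant 0$ and a boundary trace inequality forces $\psi_{+}\equiv 0$.

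The main obstacle is exactly this boundary-absorption step: the Robin datum of $\phi$ is sign-indefinite on the subset $\{\sigma<e^{-u}\}\subset\partial M$, so Proposition \ref{pre:prop1} does not apply to $\psi$ directly, and the naive comparison via the maximum principle fails. The strict interior decay $e^{-2u}(1+2\lvert\nabla u\rvert^{2})$ that comes from the nonlinear source $Ke^{2u}$, together with the sign constraint $\sigma\geqslant 0$ from (\ref{neg:eqn1}), is precisely what the trace estimate needs to close the comparison and yield the pointwise positivity of $w$.
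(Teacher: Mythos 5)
Your proof of part (ii) is correct and in fact more direct than the paper's: you obtain the sign of $\int_M K\,\dvol$ by a single integration by parts in the conformal PDE, whereas the paper derives (ii) as a corollary of (i) by integrating (\ref{neg:eqn3}). However, your proof of (i) has a genuine gap that you yourself flag, and the absorption you suggest does not close it.

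The source of the trouble is your choice of uniformized representative. Equation (\ref{neg:eqn3}) carries the boundary operator $\partial_\nu + 2$ and no zeroth-order interior term; via Lemma \ref{zero:lemma1} these coefficients are precisely the ``linearization'' of the substitution $w = e^{-2u}$ when $K_g \equiv 0$ and $\sigma_g \equiv -1$, which is the second uniformization picture of Theorem \ref{intro:thm1}. With that normalization the Robin condition for $u$ reads $\partial u/\partial\nu = 1 + \sigma e^{u}$, so $w_0 := e^{-2u}$ satisfies
\[
-\Delta_g w_0 + \frac{\lvert\nabla w_0\rvert^2}{w_0} = -2K \; \text{in } M, \qquad \frac{\partial w_0}{\partial\nu} + 2w_0 = -2\sigma w_0^{1/2} \leqslant 0 \; \text{on } \partial M,
\]
the boundary sign coming exactly from $\sigma \geqslant 0$. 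Subtracting from (\ref{neg:eqn3}) puts $w - w_0$ squarely into the scope of Remark \ref{pre:re1}: $-\Delta_g(w-w_0) = \lvert\nabla w_0\rvert^2/w_0 \geqslant 0$ and $\partial_\nu(w-w_0) + 2(w-w_0) = 2\sigma w_0^{1/2} \geqslant 0$, so $w \geqslant w_0 > 0$ with no trace estimate needed.

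Your normalization $K_g \equiv -1$, $\sigma_g \equiv 0$ destroys this cancellation. As you compute, the Robin datum of $\phi = e^{-2u}$ is $2e^{-u}(e^{-u}-\sigma)$, which is strictly positive wherever $\sigma < e^{-u}$ --- and in particular is strictly positive everywhere if $\sigma \equiv 0$, a case permitted by (\ref{neg:eqn1}). The proposed absorption cannot rescue this. The energy identity for $\psi_+ = \max(\phi - w, 0)$ is
\[
\int_M \lvert\nabla\psi_+\rvert_g^2\,\dvol + 2\int_{\partial M}\psi_+^2\,dS_g + 2\int_M e^{-2u}\bigl(1+2\lvert\nabla u\rvert_g^2\bigr)\psi_+\,\dvol = 2\int_{\partial M}e^{-u}\bigl(e^{-u}-\sigma\bigr)\psi_+\,dS_g,
\]
and while every term on the left has a definite sign, the right-hand side is \emph{linear} in $\psi_+$ with a coefficient that is not sign-controlled. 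Any trace or Young-type estimate against the quadratic boundary term leaves behind an $O(1)$ remainder $\epsilon^{-1}\int_{\partial M}e^{-4u}\,dS_g$, so the identity is satisfied by small nonzero $\psi_+$ and does not force $\psi_+ \equiv 0$. The remedy is simply to uniformize to flat interior with $\sigma_g = -1$ (as the paper implicitly does), which makes the comparison with $e^{-2u}$ land directly in the maximum principle of Remark \ref{pre:re1}.
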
 
\begin{proof} Without loss of generality, we may assume that $ K_{g} = 0 $ and $ \sigma_{g} = - 1 $ due to conformal invariance and Theorem \ref{intro:thm1}. Due to the hypotheses, Lemma \ref{zero:lemma1} indicates that there exists a positive, smooth solution $ w_{0} $ that satisfies
\begin{equation}\label{neg:eqn4}
-\Delta_{g} w_{0} + \frac{\lvert \nabla w_{0} \rvert^{2}}{w_{0}} = -2K \; {\rm in} \; M, \frac{\partial w_{0}}{\partial \nu} + 2w_{0} = -2\sigma w_{0}^{\frac{1}{2}} \; {\rm on} \; \partial M.
\end{equation}
By Proposition \ref{pre:prop1}, we know that the operator in (\ref{neg:eqn3}) is injective. The operator is also self-adjoint, hence by Fr\'edholm alternative, it is invertible. Thus (\ref{neg:eqn3}) admits a unique smooth solution $ w $ for each fixed $ K, \sigma $, the regularity of $ w $ is given by Theorem \ref{pre:thm2}.

Subtract (\ref{neg:eqn4}) by (\ref{neg:eqn3}), it follows that
\begin{equation*}
-\Delta_{g} (w - w_{0}) = \frac{\lvert \nabla w_{0} \rvert^{2}}{w_{0}} \geqslant 0 \; {\rm in} \; M, \frac{\partial (w - w_{0})}{\partial \nu} + 2(w - w_{0}) = 2\sigma w_{0}^{\frac{1}{2}} \geqslant 0 \; {\rm on} \; \partial M.
\end{equation*}
The last inequality above is due to the nonnegativity of $ \sigma $. Apply Proposition \ref{pre:prop1}, it follows that $ w - w_{0} \geqslant 0 $. Hence $ w \geqslant w_{0} > 0 $ on $ M $. It proves (i).

Applying the result of (i), we observe that
\begin{equation*}
\int_{M} - 2K \dvol = - \int_{\partial M} -2w dS_{g} > 0 \Rightarrow \int_{M} K \dvol < 0.
\end{equation*}
\end{proof}
\medskip

We can apply the monotone iteration scheme to consider the situation of prescribing constant geodesic curvature with arbitrary Gaussian curvature function, as the following proposition shows.
\begin{lemma}\label{neg:lemma2}
Let $ (M, \partial M, g) $ be a compact Riemann surface with non-empty smooth boundary. Let $ K < 0 $ everywhere be a given smooth function. The following equation
\begin{equation}\label{neg:eqna1}
-\Delta_{g} u = Ke^{2u} \; {\rm in} \; M, \frac{\partial u}{\partial \nu} - \tilde{\kappa} = 0 \; {\rm on} \; \partial M
\end{equation}
has a smooth solution with small enough positive constant $ \tilde{\kappa} > 0 $.
\end{lemma}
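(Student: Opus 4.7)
The plan is to apply the monotone iteration scheme of Theorem \ref{pre:thm1} with $ \sigma = 0 $, $ F(x, u) = K(x) e^{2u} $ and $ G(x, u) \equiv \tilde{\kappa} $. Since $ G $ is a small constant and $ \nabla G \equiv 0 $, condition (ii) of that theorem will be automatic once $ \tilde{\kappa} $ is chosen sufficiently small, so what I really need are a smooth super-solution $ u_{+} $ and a continuous sub-solution $ u_{-} $ with $ u_{-} \leqslant u_{+} $ pointwise.

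For the super-solution, I would solve the Neumann problem
\[
-\Delta_{g} u_{0} = -c_{0} \; {\rm in} \; M, \qquad \frac{\partial u_{0}}{\partial \nu} = \tilde{\kappa} \; {\rm on} \; \partial M,
\]
with $ c_{0} = \tilde{\kappa} \cdot \text{Area}_{g}(\partial M)/\text{Vol}_{g}(M) $ so that the compatibility condition holds, and then set $ u_{+} = u_{0} + C_{+} $ for a constant $ C_{+} \gg 1 $. Because $ K < 0 $ everywhere, the inequality $ -c_{0} \geqslant K e^{2(u_{0} + C_{+})} $ can be forced pointwise by taking $ C_{+} $ large, while the boundary condition $ \partial u_{+}/\partial \nu = \tilde{\kappa} $ persists since adding a constant does not affect the normal derivative.

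For the sub-solution, I would first pick $ b_{1} < 0 $ and $ b_{2} > 0 $ satisfying both the compatibility $ b_{1} \text{Vol}_{g}(M) + b_{2} \text{Area}_{g}(\partial M) = 0 $ and the bound $ b_{2} \leqslant \tilde{\kappa} $, solve
\[
-\Delta_{g} u_{1} = b_{1} \; {\rm in} \; M, \qquad \frac{\partial u_{1}}{\partial \nu} = b_{2} \; {\rm on} \; \partial M,
\]
and set $ u_{-} = u_{1} - C_{-} $. The boundary inequality then reads $ b_{2} \leqslant \tilde{\kappa} = G(\cdot, u_{-}) $ by construction, and the interior inequality $ b_{1} \leqslant K(x) e^{2(u_{1} - C_{-})} $ is a comparison between two negative quantities that reduces to $ |b_{1}| \geqslant e^{-2C_{-}} \max_{M} |K| e^{2 u_{1}} $, achievable for $ C_{-} $ large.

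The delicate point is the tension hidden in the sub-solution step: the compatibility condition pins $ b_{2} $ to be proportional to $ |b_{1}| $, so demanding $ b_{2} \leqslant \tilde{\kappa} $ with $ \tilde{\kappa} $ small forces $ |b_{1}| $ to be small, which would seem to undermine the interior inequality. The resolution is the exponential factor $ e^{-2 C_{-}} $: I first fix $ |b_{1}| $ small enough so that the compatibility partner $ b_{2} $ satisfies $ b_{2} \leqslant \tilde{\kappa} $, then I take $ C_{-} $ so large that both $ e^{-2C_{-}} \max_{M} |K| e^{2u_{1}} \leqslant |b_{1}| $ and $ u_{-} \leqslant u_{+} $ on $ M $. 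Enlarging $ C_{-} $ only pushes $ u_{-} $ further down and never violates these two conditions. With all ingredients in place, Theorem \ref{pre:thm1} produces the desired smooth solution $ u $ of (\ref{neg:eqna1}).
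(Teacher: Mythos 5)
Your proof is correct. The sub-solution step (solve a compatible Neumann problem $-\Delta_{g} u_{1} = b_{1}$, $\partial_{\nu} u_{1} = b_{2}$ with $b_{1} < 0 < b_{2}$, then shift down by a large constant) is essentially identical to the paper's, which simply takes $b_{2} = \tilde{\kappa}$ exactly rather than $b_{2} \leqslant \tilde{\kappa}$. You also share the paper's key observation that because $G(\cdot,u) \equiv \tilde{\kappa}$ is a small constant with $\nabla G \equiv 0$, condition (ii) of Theorem \ref{pre:thm1} reduces to a smallness hypothesis on $\tilde{\kappa}$ alone and does not interact with the size of the barriers, so enlarging $C_{+}$ or $C_{-}$ causes no harm.

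Where you genuinely depart from the paper is the super-solution. The paper passes through the transformation $w = e^{-2u}$ of Lemma \ref{zero:lemma1}: it solves the Robin problem $-\Delta_{g} w_{0} = -K$, $\partial_{\nu} w_{0} + 2\tilde{\kappa} w_{0} = 0$, uses Proposition \ref{pre:prop1} and the Hopf lemma to get $w_{0} > 0$ up to the boundary, scales by a small $\xi$ so that $|\nabla_{g}(\xi w_{0})|^{2}/(\xi w_{0}) - \xi K \leqslant -2K$, and then sets $u_{+} = -\tfrac{1}{2}\log(\xi w_{0})$. You instead solve a compatible Neumann problem $-\Delta_{g} u_{0} = -c_{0}$, $\partial_{\nu} u_{0} = \tilde{\kappa}$, and shift up: $u_{+} = u_{0} + C_{+}$. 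Since adding a constant leaves both $-\Delta_{g} u_{+} = -c_{0}$ and $\partial_{\nu} u_{+} = \tilde{\kappa}$ unchanged, while $K \leqslant -\epsilon < 0$ on compact $M$ makes $K e^{2u_{+}} \to -\infty$ uniformly as $C_{+} \to \infty$, the interior inequality $-c_{0} \geqslant K e^{2u_{+}}$ is eventually forced. This is a more elementary construction that avoids both the $w$-transformation and the positivity/Hopf argument; the paper's version is more elaborate but reuses machinery (Lemma \ref{zero:lemma1}, the Stampacchia-type maximum principle) that it has already set up for other results in the same section. Both routes are valid; yours buys simplicity, the paper's buys uniformity of method across the article.
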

\begin{proof}
By Equation (23) of \cite[Thm.~2.4]{XU8}, if we choose some $ \tilde{\kappa} $ small enough such that $ \tilde{\kappa} \cdot \text{Vol}_{g}(M)^{\frac{1}{q}} < 1 $ for some fixed number $ q > 2 $, then the condition (ii) of Theorem \ref{pre:thm1} holds, since for this very special Neumann boundary condition, the condition (ii) is independent of the choices of the sub- and super-solutions, neither the elliptic regularity. Fix this $ \tilde{\kappa} $.

For super-solution, we consider the PDE
\begin{equation}\label{neg:eqn5}
-\Delta_{g} w_{0} = -K \; {\rm in} \; M, \frac{\partial w_{0}}{\partial \nu} + 2\tilde{\kappa} w_{0} = 0.
\end{equation}
There exists a smooth solution $ w_{0} $ of (\ref{neg:eqn5}) due to standard elliptic theory. By maximum principle in Proposition \ref{pre:prop1}, $ w_{0} \geqslant 0 $ on $ M $ since $ -K > 0 $. By standard maximum principle of second order elliptic operator, we conclude that $ w_{0} > 0 $ in the interior $ M $, thus by the Hopf Lemma, $ w_{0} > 0 $ on the whole manifold, up to the boundary. We then take a small constant $ 0 < \xi \ll 1 $ such that
\begin{equation*}
\frac{\lvert \nabla_{g} (\xi w_{0}) \rvert^{2}}{\xi w_{0}} - \xi K \leqslant - 2K \; {\rm on} \; M.
\end{equation*}
This can be done since both sides are positive. It follows that
\begin{align*}
w : = \xi w_{0} \Rightarrow & -\Delta_{g} w + \frac{\lvert \nabla_{g} w \rvert^{2}}{w} = - \xi K + \frac{\lvert \nabla_{g} (\xi w_{0}) \rvert^{2}}{\xi w_{0}} \leqslant -2K \; {\rm on} \; M, \\
& \frac{\partial w}{\partial \nu} + 2 \tilde{\kappa} w = \xi \cdot \left( \frac{\partial w_{0}}{\partial \nu} + 2\tilde{\kappa} w_{0} \right) = 0.
\end{align*}
Therefore
\begin{equation}\label{neg:eqn6}
u_{+} : = -\frac{1}{2} \log w \Rightarrow -\Delta_{g} u_{+} \geqslant K e^{2u_{+}} \; {\rm in} \; M, \frac{\partial u_{+}}{\partial \nu} - \tilde{\kappa} \geqslant 0 \; {\rm on} \; \partial M.
\end{equation}
For sub-solution, we consider the PDE
\begin{equation*}
-\Delta_{g} u_{0} = A \; {\rm in} \; M, \frac{\partial u_{0}}{\partial \nu} = \tilde{\kappa} \; {\rm on} \; \partial M, A < 0, \int_{M} A \dvol = -\int_{\partial M} \tilde{\kappa} dS_{g}.
\end{equation*}
By standard elliptic theory \cite[Ch.~5]{T}, there exists some smooth $ u_{0} $ that solves the equation right above. Therefore for large enough $ \tilde{C} \gg 1 $, we have
\begin{equation}\label{neg:eqn7}
\begin{split}
u_{-} : = u_{0} - \tilde{C} \Rightarrow & -\Delta_{g} u_{-} = -\Delta_{g} u_{0} = A \leqslant K e^{u_{0}} \cdot e^{-\tilde{C}} = K e^{u_{-}} \; {\rm on} \; M, \\
& \frac{\partial u_{-}}{\partial \nu} - \tilde{\kappa} = \frac{\partial u_{0}}{\partial \nu} - \tilde{\kappa}  = 0 \; {\rm on} \; \partial M, \\
& u_{-} \leqslant u_{+} \; {\rm on} \; M \; {\rm pointwise}.
\end{split}
\end{equation}
The first inequality in (\ref{neg:eqn7}) holds since both $ A $ and $ K e^{u_{-}} $ are negative terms. By (\ref{neg:eqn6}) and (\ref{neg:eqn7}), we conclude from Theorem \ref{pre:thm1} that the equation (\ref{neg:eqna1}) has a smooth solution.
\end{proof}
\begin{remark}\label{neg:re2}
The equation (\ref{neg:eqna1}) is another Brezis-Merle type equation, with nontrivial Neumann boundary condition, which is a special case of (\ref{BM:eqn1}) with $ \kappa = 0 $. The restriction is that we must assume $ K < 0 $ everywhere for (\ref{neg:eqna1}).
\end{remark}
\medskip

\section{Prescribing Gaussian and Geodesic Curvatures with $ \chi(M) > 0 $}
A typical example of a compact Riemann surface $ (M, \partial M, g) $ with $ \chi(M) > 0 $ is the upper-hemisphere. It is analogous to the Nirenberg problem \cite{AC} which considers the prescribing Gaussian curvature problem on $ \mathbb{S}^{2} $, which is a challenging problem since we expect obstructions to some extent due to the topology of the sphere. 

Some people have studied the $ \chi(M) > 0 $ case. One remarkable result is an extension of the classical uniformization theorem, given by \cite{OsPhSa}. Other known results are either for prescribing Gaussian curvature with minimal boundary \cite{ESC3}, or prescribing geodesic curvature with scalar-flat metric \cite{CV}. 

Recall the PDE for prescribing Gaussian and geodesic curvature functions $ K, \sigma \in \calC^{\infty}(M) $:
\begin{equation}\label{ps:eqn1}
-\Delta_{g} u + K_{g} = K e^{2u} \; {\rm in} \; M, \frac{\partial u}{\partial \nu} + \sigma_{g} = c\sigma e^{u} \; {\rm on} \; \partial M, c > 0.
\end{equation}
When neither $ K $ nor $ \sigma $ vanishes identically, the only impossible case is $ K \leqslant 0 $ and $ \sigma \leqslant 0 $, due to Gauss-Bonnet Theorem. In this section, we discuss the possibilities of various choices of prescribing Gaussian and geodesic curvature functions. The results in \S3 seems to be the optimal result without using variational methods or Morse theory. The essential difficulty for this type of method is due to the topological obstruction. Analytically, we mainly use
\begin{equation*}
-\Delta_{g} u = A K_{g} \; {\rm in} \; M, \frac{\partial u}{\partial \nu} = B \sigma_{g} \; {\rm on} \; \partial M
\end{equation*}
to construct one of the barriers. But the order of constants $ A, B $ are determined by the Gauss-Bonnet formula. For example, if $ K_{g} < 0, \sigma_{g} > 0 $ and $ \chi(M) > 0 $, we must have $ A > B $, since otherwise the compatibility condition $ \int_{M} AK_{g} \dvol = -\int_{\partial M} B \sigma_{g} dS_{g} $ cannot hold.
The following proposition shows that there exists at least one example for possible case.
\begin{proposition}\label{ps:prop1}
Let $ (M, \partial M, g) $ be a compact Riemann surface with non-empty smooth boundary, $ \chi(M) > 0 $. There exist smooth functions $ K_{i}, \sigma_{i}, i = 1, \dotso, 5 $, $ K_{i}, \sigma_{i} \not\equiv 0 $ with
\begin{align*}
& K_{1} \geqslant 0 \; \text{everywhere on} \; M, \sigma_{1} \geqslant 0 \; \text{everywhere on} \; M; \\
& K_{2} \; \text{changes sign on} \; M, \sigma_{2} \geqslant 0 \; \text{everywhere on} \; M; \\
& K_{3} \leqslant 0 \; \text{everywhere on} \; M, \sigma_{3} \geqslant 0 \; \text{everywhere on} \; M; \\
& K_{4} \geqslant 0 \; \text{everywhere on} \; M, \sigma_{4} \; \text{changes sign on} \; M; \\
& K_{5} \geqslant 0 \; \text{everywhere on} \; M, \sigma_{5} \leqslant 0 \; \text{everywhere on} \; M; \\
& K_{6} \; \text{changes sign on} \; M, \sigma_{6} \; \text{changes sign on} \; M; \\
& K_{7} \leqslant 0 \; \text{everywhere on} \; M, \sigma_{7} \; \text{changes sign on} \; M; \\
& K_{8} \; \text{changes sign on} \; M, \sigma_{8} \leqslant 0 \; \text{everywhere on} \; M; \\
\end{align*}
such that $ K_{i}, \sigma_{i} $ can be realized as Gaussian and geodesic curvatures of some conformal metric $ g_{i} \in [g] $, $ i = 1, \dotso, 8 $ respectively.
\end{proposition}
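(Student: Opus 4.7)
The plan is to give a uniform construction that handles all eight sign patterns by solving a linear Neumann-type boundary value problem directly on the given metric $ g $. The key observation is that, by the Gauss-Bonnet formula $ \int_{M} K_{g} \dvol + \int_{\partial M} \sigma_{g} dA_{g} = 2\pi \chi(M) > 0 $, prescribing curvatures with any of the eight listed sign patterns is consistent with a single integral constraint that can be satisfied case by case. The construction proceeds entirely on the linear level, without invoking the Brezis--Merle machinery of \S3, since we have the freedom to \emph{define} $K_i, \sigma_i$ to be whatever the geometry produces.

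For each case $ i = 1, \dotso, 8 $, I would first choose data $ f_{i} \in \calC^{\infty}(M) $ and $ h_{i} \in \calC^{\infty}(\partial M) $, both not identically zero, with the pointwise sign pattern matching that demanded of $ K_{i} $ and $ \sigma_{i} $ respectively, subject to the single integral normalization
\[
\int_{M} f_{i} \, \dvol + \int_{\partial M} h_{i} \, dA_{g} = 2\pi \chi(M).
\]
In every one of the eight listed patterns, at least one of $ f_{i}, h_{i} $ can be taken freely positive on a large region (the only sign pattern excluded by Gauss-Bonnet is ``both non-positive everywhere,'' which is exactly the one absent from the list), so this normalization can always be arranged. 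Then I solve the linear boundary value problem
\[
-\Delta_{g} u = f_{i} - K_{g} \; {\rm in} \; M, \quad \frac{\partial u}{\partial \nu} = h_{i} - \sigma_{g} \; {\rm on} \; \partial M,
\]
whose Fredholm compatibility $ \int_{M}(f_{i} - K_{g}) \dvol + \int_{\partial M}(h_{i} - \sigma_{g}) dA_{g} = 0 $ follows at once from the normalization above together with Gauss-Bonnet. By standard elliptic theory combined with the regularity statement in Theorem \ref{pre:thm2}, the solution $ u $ (unique up to an additive constant) is smooth on $M$.

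The conformal metric $ \tilde{g}_{i} = e^{2u} g $ then has $ K_{\tilde{g}_{i}} = e^{-2u}(-\Delta_{g} u + K_{g}) = e^{-2u} f_{i} $ and $ \sigma_{\tilde{g}_{i}} = e^{-u}(\partial_{\nu} u + \sigma_{g}) = e^{-u} h_{i} $; taking $ K_{i} := e^{-2u} f_{i} $ and $ \sigma_{i} := e^{-u} h_{i} $ produces smooth, nontrivial functions that inherit the sign patterns of $ f_{i} $ and $ h_{i} $ since $ e^{-2u}, e^{-u} > 0 $ pointwise. The main obstacle in this scheme is the coordinated construction of the pair $ (f_{i}, h_{i}) $ matching both the desired pointwise sign condition and the scalar integral constraint; this is essentially a finite dimensional algebraic issue, resolvable in each of the eight cases by redistributing positive and negative mass between the bulk $f_i$ and the boundary term $h_i$. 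The hypothesis $ \chi(M) > 0 $ plays the essential role by forcing the right-hand side $ 2\pi\chi(M) $ of the normalization to be strictly positive, which is precisely what rules out the lone sign pattern $ K \leqslant 0, \sigma \leqslant 0 $ everywhere that would be genuinely obstructed by the topology.
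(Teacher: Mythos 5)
Your proposal is correct and in fact cleaner than the paper's own argument, though the underlying mechanism is the same (solve a linear Neumann problem, then read off the curvatures). The paper first normalizes to a uniformization model via Theorem~\ref{intro:thm1} — using $K_{g}=0,\ \sigma_{g}=1$ for some cases and $K_{g}=1,\ \sigma_{g}=0$ for others — and then solves $-\Delta_{g}u_{i}=f_{i}$, $\partial u_{i}/\partial\nu = A_{i}$ with \emph{constant} boundary data $A_{i}$ subject to $|A_{i}|<1$, splitting the eight cases between two models. You skip the uniformization step entirely by observing that the Fredholm compatibility for the Neumann problem
\[
-\Delta_{g} u = f_{i} - K_{g},\qquad \frac{\partial u}{\partial \nu} = h_{i} - \sigma_{g}
\]
is exactly the Gauss--Bonnet constraint $\int_{M} f_{i}\,\dvol + \int_{\partial M} h_{i}\,dA_{g} = 2\pi\chi(M)$; this handles all eight sign patterns in one uniform scheme with arbitrary (not just constant) boundary data, and you correctly identify that the positivity of $2\pi\chi(M)$ is what admits every pattern except $K\leqslant 0,\ \sigma\leqslant 0$, the one omitted from the list. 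One small slip: you cite Theorem~\ref{pre:thm2} for regularity, but that theorem assumes $\operatorname{Ker}(L)=\{0\}$, which fails for the pure Neumann operator (constants lie in the kernel). Nothing breaks — regularity for the Neumann problem with smooth data is standard, and the paper's own proof of this proposition relies on the same ``standard elliptic theory'' without invoking Theorem~\ref{pre:thm2} — but the citation should be replaced by a reference to classical Neumann regularity rather than that particular theorem.
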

\begin{proof}
We construct $ K_{i} $ and $ \sigma_{i} $. For the first three cases, we choose the model case $ K_{g} = 0, \sigma_{g} = 1 $, due to Theorem \ref{intro:thm1}. Consider the following equations
\begin{equation}\label{ps:eqn2}
-\Delta_{g} u_{i} = f_{i} \; {\rm in} \; M, \frac{\partial u_{i}}{\partial \nu} = A_{i} \; {\rm on} \; \partial M, \int_{M} f_{i} \dvol = -\int_{\partial M} A_{i} dS_{g}, i = 1, 2, 3
\end{equation}
We choose $ f_{1} \geqslant 0 $ on $ M $, $ f_{2} $ changing sign on $ M $ and $ f_{3} \leqslant 0 $ on $ M $. Furthermore, we choose $ f_{i}, i = 1, 2, 3 $ such that $ \lvert A_{i} \rvert < 1 $. By standard elliptic theory, (\ref{ps:eqn2}) can be solved by smooth functions $ u_{1}, u_{2}, u_{3} $, respectively. It follows from (\ref{ps:eqn2}) that
\begin{equation}\label{ps:eqn3}
-\Delta_{g} u_{i} = \frac{f_{i}}{e^{2u_{i}}} \cdot e^{2u_{i}} : = K_{i} e^{2u_{i}} \; {\rm in} \; M, \frac{\partial u_{i}}{\partial \nu} + 1 = A_{i} + 1 = \frac{A_{i} + 1}{e^{u_{i}}} e^{u_{i}} : = \sigma_{i} e^{u_{i}} \; {\rm on} \; \partial M, i = 1, 2, 3.
\end{equation}
Note that the sign conditions of $ K_{i} $ are exactly the same as $ f_{i} $ since $ e^{2u_{i}} > 0 $ as always. Similarly, $ \sigma_{i} > 0 $ on $ \partial M $ since $ A_{i} + 1 > 0 $. The functions $ K_{i}, \sigma_{i}, i = 1, 2, 3 $ are desired Gaussian and geodesic curvatures of $ g_{i} = e^{2u_{i}} g $ for the first three cases.

For the last two cases, we choose the model case $ K_{g} = 1, \sigma_{g} = 0 $ that can be achieved by some conformal change, due to Theorem \ref{intro:thm1} again. Consider the following equations
\begin{equation}\label{ps:eqn4}
-\Delta_{g} u_{i} = B_{i} \; {\rm in} \; M, \frac{\partial u_{i}}{\partial \nu} = h_{i} \; {\rm on} \; \partial M, \int_{M} B_{i} \dvol = -\int_{\partial M} h_{i} dS_{g}, i = 4, 5.
\end{equation}
We choose $ h_{4} $ changing sign on $ \partial M $, and $ h_{5} \leqslant 0 $ on $ \partial M $. Furthermore, we may scale $ h_{i}, i = 4, 5 $ so that $ \lvert B_{i} \rvert < 1 $. With the smooth solutions $ u_{i}, i = 4, 5 $ of (\ref{ps:eqn4}), we have
\begin{equation}\label{ps:eqn5}
-\Delta_{g} u_{i} + 1 = B_{i} + 1 =  \frac{B_{i} + 1}{e^{2u_{i}}} \cdot e^{2u_{i}} : = K_{i} e^{2u_{i}} \; {\rm in} \; M, \frac{\partial u_{i}}{\partial \nu} = \frac{h_{i}}{e^{u_{i}}} e^{u_{i}} : = \sigma_{i} e^{u_{i}} \; {\rm on} \; \partial M, i = 4, 5.
\end{equation}
Clearly, $ K_{i} > 0 $ on $ M $, $ \sigma_{4} $ changes sign on $ \partial M $ and $ \sigma_{5} \leqslant 0 $ on $ \partial M $. By (\ref{ps:eqn1}), we conclude that $ K_{i}, \sigma_{i} $ are Gaussian and geodesic curvatures of the metrics $ g_{i} = e^{2u_{i}} g $ for $ i = 4, 5 $, respectively. 

For the last three, we consider the equations
\begin{align*}
-\Delta_{g} u_{6} & = F_{6} \; {\rm in} \; M, \frac{\partial u_{6}}{\partial \nu} = F_{6}' \; {\rm on} \; \partial M, F_{6} \; \text{changes sign}, \\
& \qquad -\frac{3}{2} < F_{6}' < -\frac{1}{2}, \int_{M} F_{6} \dvol = - \int_{\partial M} F_{6}' dS_{g}; \\
-\Delta_{g} u_{7} & = F_{7} \; {\rm in} \; M, \frac{\partial u_{7}}{\partial \nu} = F_{7}' \; {\rm on} \; \partial M, F_{7} < -1, F_{7}' \; \text{changes sign}, \int_{M} F_{7} \dvol = - \int_{\partial M} F_{7}' dS_{g}; \\
-\Delta_{g} u_{8} & = F_{8} \; {\rm in} \; M, \frac{\partial u_{8}}{\partial \nu} = F_{8}' \; {\rm on} \; \partial M, F_{8} \; \text{changes sign}, F_{8}' < - 1, \int_{M} F_{8} \dvol = - \int_{\partial M} F_{8}' dS_{g}.
\end{align*}
It is clear that all choices of $ F_{i}, F_{i}', i = 6, 7, 8 $ are possible and hence result in smooth solutions $ u_{i}, i = 6, 7, 8 $. Applying model case $ K_{g} = 1, \sigma_{g} = 0 $ for $ u_{7} $ and model case $ K_{g} = 0, \sigma_{g} = 1 $ for $ u_{8} $, we conclude that the last three cases hold. The arguments are very similar as above, we omit them.
\end{proof}
\medskip

When $ \chi(M) \leqslant 0 $, we can get similar existence results as above. We will not list all of them, instead we just show an example.
\begin{proposition}\label{ps:prop2}
Let $ (M, \partial M, g) $ be a compact Riemann surface with non-empty smooth boundary, $ \chi(M) = 0 $. There exists a metric $ \tilde{g} \in [g] $ such that $ K_{\tilde{g}} > 0 $ and $ \sigma_{\tilde{g}} < 0 $.
\end{proposition}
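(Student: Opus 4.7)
My plan is to adapt the constructive strategy used for the $\chi(M)>0$ cases in Proposition \ref{ps:prop1}, now exploiting the flat model available at $\chi(M)=0$. By part (i) of Theorem \ref{intro:thm1} combined with the Gauss-Bonnet identity (\ref{intro:eqn2}), the constant Gaussian curvature produced there must vanish when $\chi(M)=0$ and $\partial M$ is totally geodesic, so I may assume without loss of generality that the background satisfies $K_g=0$ and $\sigma_g=0$. Under this normalization the target equation (\ref{ps:eqn1}) reduces to finding $u\in\calC^{\infty}(M)$ with $-\Delta_g u=K e^{2u}$ in $M$ and $\frac{\partial u}{\partial\nu}=\sigma e^{u}$ on $\partial M$.

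Rather than prescribing $K$ and $\sigma$ first and then trying to solve a nonlinear PDE, I would reverse the logic, exactly as in the proofs of Proposition \ref{ps:prop1}: prescribe convenient auxiliary data $f\in\calC^{\infty}(M)$ with $f>0$ on $M$ and $h\in\calC^{\infty}(\partial M)$ with $h<0$ on $\partial M$, satisfying the compatibility condition $\int_M f\dvol+\int_{\partial M} h\,dS_g=0$. Such a choice is possible because the two integrals have opposite signs and either one can be rescaled to match the other in magnitude; note also that this is precisely the Gauss-Bonnet constraint for $\chi(M)=0$. Standard elliptic theory for the linear Neumann problem $-\Delta_g u=f$ in $M$, $\frac{\partial u}{\partial\nu}=h$ on $\partial M$ then produces a smooth solution $u$, unique up to an additive constant.

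With $u$ in hand I define $K:=f\,e^{-2u}$ on $M$ and $\sigma:=h\,e^{-u}$ on $\partial M$. Since $e^{2u}$ and $e^{u}$ are strictly positive, the signs of $K$ and $\sigma$ coincide with those of $f$ and $h$, giving $K>0$ on $M$ and $\sigma<0$ on $\partial M$. By construction, $u$ solves (\ref{ps:eqn1}) with these $K$ and $\sigma$ and with the flat model $K_g=\sigma_g=0$, so the conformal metric $\tilde{g}:=e^{2u}g$ has $K_{\tilde{g}}=K>0$ and $\sigma_{\tilde{g}}=\sigma<0$ pointwise, as required.

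There is no real analytic obstruction to overcome: the nonlinearity of (\ref{ps:eqn1}) is sidestepped by producing $K$ and $\sigma$ only after the linear problem has been solved. The single delicate point is the consistency between the desired strict sign pattern $K>0$, $\sigma<0$ and the Neumann compatibility condition, but that is automatic precisely because $\chi(M)=0$ makes the Gauss-Bonnet integral vanish, so the two requirements coincide instead of competing. Hence the proposition follows.
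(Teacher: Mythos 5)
Your proposal is correct and matches the paper's argument essentially verbatim: reduce to the flat, geodesic-boundary model case $K_g=\sigma_g=0$ (available because $\chi(M)=0$), solve the linear Neumann problem $-\Delta_g u=f$, $\partial u/\partial\nu=h$ with $f>0$, $h<0$, and compatibility $\int_M f\,\dvol=-\int_{\partial M} h\,dS_g$, then define $K:=fe^{-2u}$ and $\sigma:=he^{-u}$ so that $\tilde g=e^{2u}g$ has the desired curvature signs. The paper phrases this with constants $b_1>0>b_2$ rather than general functions, but the logic is the same.
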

\begin{proof}
We consider the PDE
\begin{equation*}
-\Delta_{g} u = b_{1} \; {\rm in} \; M, \frac{\partial u}{\partial \nu} = b_{2} \; {\rm on} \; \partial M, b_{2} < 0 < b_{1}, \int_{M} b_{1} \dvol = -\int_{\partial M} b_{2} dS_{g}.
\end{equation*}
The argument then follows exactly as in Proposition \ref{ps:prop1}.
\end{proof}

\bibliographystyle{plain}
\bibliography{KWGaussGeodesic}

\end{document}